\theoremstyle{definition}
\newtheorem{definition}{Definition}
\theoremstyle{remark}
\theoremstyle{plain}
\newtheorem{theorem}{Theorem}
\newtheorem{corollary}{Corollary}
\newtheorem{lemma}{Lemma}
\newtheorem{prop}{Proposition}
\DeclareMathOperator{\im}{Im}
\DeclareMathOperator{\re}{Re}
\DeclareMathOperator{\defeq}{\stackrel{\text{def\textsuperscript{\underline{n}}}}{=}}
\begin{document}

\title{Carleson measures for Hilbert spaces of analytic functions on the complex half-plane}
\author{Andrzej S. Kucik}
\date{}
\maketitle

\begin{flushright}
School of Mathematics \\
University of Leeds \\
Leeds LS2 9JT \\
United Kingdom \\
e-mail: \ttfamily{mmask@leeds.ac.uk}

\end{flushright}

\begin{abstract}
The notion of a \emph{Carleson measure} was introduced by Lennart Carleson in his proof of the Corona Theorem for $H^\infty(\mathbb{D})$. In this paper we will define it for certain type of reproducing kernel Hilbert spaces of analytic functions of the complex half-plane, $\mathbb{C}_+$, which will include Hardy, Bergman and Dirichlet spaces. We will obtain several necessary or sufficient conditions for a positive Borel measure to be Carleson by preforming tests on reproducing kernels, weighted Bergman kernels, and studying the tree model obtained from a decomposition of the complex half-plane. The Dirichlet space will be investigated in detail as a special case. Finally, we will present a control theory application of Carleson measures in determining admissibility of controls in well-posed linear evolution equations.
\textbf{Mathematics Subject Classification (2010).} Primary 30H25, 93B28; Secondary 28E99, 30H10, 30H20, 46C15, 93B05.\\
\textbf{Keywords.} Carleson measures; reproducing kernel Hilbert spaces; Dirichlet space; control operators; admissibility; Laplace transform
\end{abstract}

\section{Introduction}

Let $\mu$ be a positive Borel measure on a set $\Omega \subseteq \mathbb{C}$, and let $\mathcal{H}$ be a Hilbert space of complex-valued functions on $\Omega$. If there exists a constant $C(\mu)$, depending on $\mu$ only, such that for all $h \in \mathcal{H}$ we have
\begin{equation}
	\int_{\Omega} |h|^2 \, d\mu \leq C(\mu) \|h\|^2_{\mathcal{H}},
	\label{eq:carlesonembedding}
\end{equation}
then $\mu$ is called a \emph{Carleson measure} for $\mathcal{H}$ and we shall refer to \eqref{eq:carlesonembedding} as the \emph{Carleson criterion}. The set of Carleson measures for $\mathcal{H}$ will be denoted by $CM(\mathcal{H})$. The notion of a Carleson measure was introduced by Lennart Carleson in his proof of the Corona Theorem for $H^\infty(\mathbb{D})$ in \cite{carleson1962}, where a complete characterisation of Carleson measures for $H^p(\mathbb{D}), (1 \leq p < \infty)$ was given. In 1967 Lars H\"{o}rmander extended Carleson's result to the unit ball of $\mathbb{C}^n$ \cite{hormander1967}, and since then many other generalisations and variants of this idea have been studied (we mention in particular the characterisation of Carleson measures for the weighted Bergman spaces on $\mathbb{D}$ by J. Cima and W. Wogen in \cite{cima1982} and on the unit ball of $\mathbb{C}^n$ by D. Luecking in \cite{luecking1983}, and for the weighted Dirichlet space on $\mathbb{D}$ by D. Stegenga in \cite{stegenga1980}). The popularity of this area of research is a consequence of wide applicability of Carleson embeddings, going far beyond Carleson's original formulation of this concept, and in particular their usefulness in the study of certain classes of operators acting on $\mathcal{H}$ (for example multiplication operators \cite{kucik2015}, \cite{stegenga1980}). However, this area of research is usually limited to the case of $\Omega = \mathbb{D}$ or the unit ball of $\mathbb{C}^n$, and other domains are rarely considered. 

In this paper we shall consider 
\begin{displaymath}
	\Omega = \mathbb{C}_+ := \left\{z =x+iy \in \mathbb{C} \; : \; x > 0 \right\},
\end{displaymath}
the open right complex half-plane. This choice of domain is not arbitrary and its motivation is drawn from two main reasons. First of all, for some of the most well-known Hilbert spaces of analytic functions on the open unit disk of the complex place, such as the Hardy space $H^2$ \cite{duren1970}, \cite{mashreghi2009}, the Bergman space $\mathcal{B}^2$ \cite{duren2004}, \cite{hedenmalm2000} or the Dirichlet space $\mathcal{D}$ \cite{arcozzi2011}, \cite{el-fallah2014}, there exists a fundamental relation between the norm on each of this spaces and the norm of some weighted sequence space $\ell^2$, namely
\begin{align*}
\|f\|^2_{H^2} &:= \sup_{0 < r < 1} \frac{1}{2\pi} \int_0^{2\pi} \left|f(re^{i\theta})\right|^2 \, d\theta = \sum_{n=0}^\infty |a_n|^2 &  \left(\forall f(z) = \sum_{n=0}^\infty a_n z^n \in H^2 \right), \\
\|f\|^2_{\mathcal{B}^2} &:=  \frac{1}{\pi} \int_{\mathbb{D}}\left|f(z)\right|^2 \, dz = \sum_{n=0}^\infty \frac{|a_n|^2}{n+1} &\left(\forall f(z) = \sum_{n=0}^\infty a_n z^n \in \mathcal{B}^2 \right), \\
\|f\|^2_{\mathcal{D}} &:=  \|f\|^2_{H^2}+\|f'\|^2_{\mathcal{B}^2} = \sum_{n=0}^\infty (n+1)|a_n|^2 &  \left(\forall f(z) = \sum_{n=0}^\infty a_n z^n \in \mathcal{D}\right).
\end{align*}
But of course for some problems it is more natural to consider the continuous version of the weighted sequence space $\ell^2$, that is the weighted $L^2(0, \, \infty)$ space. It follows from the Plancherel's Theorem, that for some class of weights, the Laplace transform ($\mathfrak{L}$) is an isometric map from the weighted space of square-(Lebesgue)-integrable functions on the positive real half-line to some spaces of analytic functions defined on the open right complex half-plane (we shall present this statement rigorously in the next section). And for example, if we denote by $H^2(\mathbb{C}_+), \, \mathcal{B}^2(\mathbb{C}_+)$ and $\mathcal{D}(\mathbb{C}_+)$ the spaces of Hardy, Bergman and Dirichlet (respectively) on the half-plane, we have:
\begin{align*}
\|F\|^2_{H^2(\mathbb{C}_+)} &:= \sup_{x>0} \int_{-\infty}^\infty \left|F(x+iy)\right|^2 \, \frac{dy}{2\pi} = \int_0^\infty |f(t)|^2 \, dt, \\
\|F\|^2_{\mathcal{B}^2(\mathbb{C}_+)} &:= \int_{\mathbb{C}_+} \left|F(z)\right|^2 \, \frac{dz}{\pi} = \int_0^\infty |f(t)|^2 \, \frac{dt}{t}, \\
\|F\|^2_{\mathcal{D}(\mathbb{C}_+)} &:= \|F\|^2_{H^2(\mathbb{C}_+)}+\|F'\|^2_{\mathcal{B}^2(\mathbb{C}_+)}=\int_0^\infty |f(t)|^2 (t+1) \, dt,
\end{align*}
for all $F=\mathfrak{L}[f]$ in $H^2(\mathbb{C}_+)$, or in $\mathcal{B}^2(\mathbb{C}_+)$, or in $\mathcal{D}(\mathbb{C}_+)$ and $f$ in an appropriate weighted $L^2$ space on $(0, \, \infty)$.

One of the instances where the continuous setting is more suitable, and also the second reason motivating our research of Carleson measures for these spaces, is the study of control and observation operators for linear evolution equations. It has been shown in \cite{jacob2014} that the admissibility criterion for these operators is equivalent to the Carleson criterion. We shall explain it in the concluding section of this paper.

In Section \ref{sec:preliminaries} we will introduce the definition of spaces which will be studied in this paper and present their relation to the weighted $L^2$ spaces on $(0, \, \infty)$ via the isometric map defined by the Laplace transform. In Section \ref{sec:kernels} we  will perform some tests on reproducing and weighted Bergman kernels in order to obtain sufficient as well as necessary conditions to satisfy the Carleson criterion. Carleson measures for the Dirichlet space will be characterised in Section \ref{sec:dirichlet}. Following that, in Section \ref{sec:trees}, we will introduce some techniques of analysis on trees to produce a sufficient condition for a measure to be Carleson for spaces which are generalisations of the Dirichlet space. Finally, an application of these results to control theory will be given in Section \ref{sec:application}.

\section{Preliminaries}
\label{sec:preliminaries}
Let us now present some essential definitions and results. Let $\tilde{\nu}$ be a positive regular Borel measure on $[0, \, \infty)$ satisfying the so-called \emph{$\Delta_2$-condition}:
\begin{equation}
	\sup_{r>0} \frac{\tilde{\nu}[0, \, 2r)}{\tilde{\nu}[0, \, r)}  < \infty,
	\tag{$\Delta_2$}
	\label{eq:delta2}
\end{equation}
and let $\lambda$ denote the Lebesgue measure on $i\mathbb{R}$. We define $\nu := \tilde{\nu} \otimes \lambda$ to be a positive regular Borel measure on the closed right complex half-plane $\overline{\mathbb{C}_+}~:=~[0, \, \infty) \times i\mathbb{R}$. For this measure and $1 \leq p < \infty$ a \emph{Zen space} \cite{jacob2013} is defined to be:
\begin{displaymath}
	A^p_\nu := \left\{F : \mathbb{C}_+ \longrightarrow \mathbb{C} \, \text{analytic} \; : \; \left\|F\right\|^p_{A^p_\nu} := \sup_{\varepsilon >0} \int_{\mathbb{C}_+} \left|F(z+\varepsilon)\right|^p \, d\nu < \infty \right\}.
\end{displaymath}
The Zen space definition naturally extends the definition of weighted Bergman spaces, $\mathcal{B}^p_\alpha(\mathbb{C}_+)$. Indeed, if $d\tilde{\nu}(r)=r^\alpha dr/\pi$, for some $\alpha>-1$, then $A^p_\nu~=~\mathcal{B}^p_\alpha(\mathbb{C}_+)$ (the fact that both Zen and Bergman spaces are usually denoted by the letter $A$ justifies why we chose to label the latter with $\mathcal{B}$, avoiding potential confusions). Also, if $2\pi\tilde{\nu}=\delta_0$, the Dirac measure with point mass at 0, then $A^p_\nu= H^p(\mathbb{C}_+)$, which we may also identify with $\mathcal{B}^2_{-1}(\mathbb{C}_+)$. 
If we now assume that $(\nu_n)_{n=0}^m~=~(\tilde{\nu}_n \otimes \lambda)_{n=0}^m, \, m \in \mathbb{N} \cup \{\infty\}$, is a sequence of positive regular Borel measures on $\overline{\mathbb{C}_+}$, each of which satisfies \eqref{eq:delta2}-condition, we can define a new space of functions, further extending the definition of Zen spaces (and consequently weighted Bergman spaces), by setting
\begin{displaymath}
	A^p\left( \mathbb{C}_+, \, (\nu_n)_{n=0}^m \right) := \left\{F : \mathbb{C}_+ \longrightarrow \mathbb{C} \, \text{analytic} \; : \; \left\|F\right\|^p_{A^p_\nu} := \sum_{n=0}^m \left\|F^{(n)}\right\|^p_{A^p_{\nu_n}} < \infty \right\}.
\end{displaymath}
In case of $p=2$, the relation between these spaces and the weighted $L^2$ spaces on $(0, \, \infty)$ has been introduced in \cite{kucik2015} and we will quote some of the results here.
\begin{theorem}
\label{thm:mainthm}
The Laplace transform defines an isometric map 
\begin{displaymath}
	\mathfrak{L} : L^2_{w_{(m)}} (0, \, \infty) \longrightarrow A^2\left(\mathbb{C}_+, \, (\nu_n)_{n=0}^m\right),
\end{displaymath}
where
\begin{equation}
	w_{(m)} := \sum_{n=0}^m w_n(t) \qquad \text{ and } \qquad w_n(t) := 2\pi t^{2n}\int_0^\infty e^{-2rt} \, d\tilde{\nu}_n(r) \quad (t > 0).
\label{eq:wm}
\end{equation}
Here by $L^2_{w_{(m)}}(0, \, \infty)$ we mean the Hilbert space of functions $f : (0, \, \infty) \longrightarrow \mathbb{C}$ such that
\begin{displaymath}
	\|f\|^2_{L^2_{w_{(m)}}(0, \, \infty)} := \int_0^\infty |f(t)|^2 w_{(m)}(t) \, dt < \infty.
\end{displaymath}
\end{theorem}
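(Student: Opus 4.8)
\emph{Proof idea.}
The plan is to reduce the identity to a single application of Plancherel's theorem on the vertical lines $\{\re z = x\}$, integrated against each $\tilde{\nu}_n$ and then summed over $n$; the $\Delta_2$-condition enters only to guarantee that every quantity in sight is finite and that $\mathfrak{L}$ genuinely makes sense on $L^2_{w_{(m)}}(0,\infty)$.

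First I would establish the preliminary estimates. The $\Delta_2$-condition forces $\tilde{\nu}_n[0,r)>0$ for all $r>0$ and makes $r\mapsto\tilde{\nu}_n[0,r)$ grow at most polynomially; splitting $\int_0^\infty e^{-2rt}\,d\tilde{\nu}_n(r)$ over dyadic annuli then shows $w_n(t)=2\pi t^{2n}\int_0^\infty e^{-2rt}\,d\tilde{\nu}_n(r)<\infty$ for every $t>0$. For a lower bound, the decisive term is $w_0$ (the one carrying no factor $t^{2n}$): for each $\delta>0$ one has $w_{(m)}(t)\ge w_0(t)\ge 2\pi\,\tilde{\nu}_0[0,\delta)\,e^{-2\delta t}$, so choosing $\delta\in(0,x)$ with $\tilde{\nu}_0[0,\delta)>0$ gives $\int_0^\infty e^{-2xt}/w_{(m)}(t)\,dt<\infty$ for every $x>0$. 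Consequently, for $f\in L^2_{w_{(m)}}(0,\infty)$ the Cauchy--Schwarz inequality places $t\mapsto t^{n}f(t)e^{-xt}$ in $L^1(0,\infty)\cap L^2(0,\infty)$ for every $x>0$ and every $n$ (absorb the polynomial factor into a slightly smaller exponential), so $F:=\mathfrak{L}[f]$ is a well-defined analytic function on $\mathbb{C}_+$ and differentiation under the integral sign gives $F^{(n)}=\mathfrak{L}[(-\,\cdot\,)^{n}f]$.

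Now the main computation. Fix $n$ and $\varepsilon>0$. For $x>0$ the function $y\mapsto F^{(n)}(x+\varepsilon+iy)$ is exactly the Fourier transform of $t\mapsto(-t)^{n}f(t)e^{-(x+\varepsilon)t}$ (extended by $0$ to the negative axis), an $L^1\cap L^2$ function by the previous step, so Plancherel's theorem gives
\[
	\int_{-\infty}^{\infty}\bigl|F^{(n)}(x+\varepsilon+iy)\bigr|^{2}\,dy=2\pi\int_{0}^{\infty}t^{2n}|f(t)|^{2}e^{-2(x+\varepsilon)t}\,dt.
\]
Integrating in $x$ against $\tilde{\nu}_n$ and using Tonelli's theorem (the integrand being nonnegative) yields, by the very definition of $w_n$,
\[
	\int_{\mathbb{C}_+}\bigl|F^{(n)}(z+\varepsilon)\bigr|^{2}\,d\nu_n(z)=\int_{0}^{\infty}|f(t)|^{2}e^{-2\varepsilon t}\,w_n(t)\,dt.
\]
The right-hand side is non-increasing in $\varepsilon$, hence its supremum over $\varepsilon>0$ equals its limit as $\varepsilon\downarrow 0$, which by monotone convergence is $\int_0^\infty|f(t)|^2w_n(t)\,dt$; thus $\|F^{(n)}\|^{2}_{A^{2}_{\nu_n}}=\int_{0}^{\infty}|f(t)|^{2}w_n(t)\,dt<\infty$. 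Summing over $n=0,\dots,m$ (monotone convergence once more when $m=\infty$) produces
\[
	\|\mathfrak{L}[f]\|^{2}_{A^{2}(\mathbb{C}_+,(\nu_n)_{n=0}^{m})}=\sum_{n=0}^{m}\int_{0}^{\infty}|f(t)|^{2}w_n(t)\,dt=\int_{0}^{\infty}|f(t)|^{2}w_{(m)}(t)\,dt=\|f\|^{2}_{L^{2}_{w_{(m)}}(0,\infty)},
\]
which is simultaneously the asserted isometry and the fact that $\mathfrak{L}[f]$ lies in the target space.

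I expect the only genuine obstacle to lie in the first paragraph: extracting from the $\Delta_2$-condition that $w_{(m)}$ is everywhere finite and bounded below by a positive exponential, which is precisely what legitimises the definition of $\mathfrak{L}$ on $L^2_{w_{(m)}}(0,\infty)$ and the subsequent uses of Plancherel and Tonelli. Everything after that is Plancherel together with Tonelli's and the monotone convergence theorems.
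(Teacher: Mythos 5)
Your argument is correct: the reduction to Plancherel's theorem on vertical lines, followed by Tonelli in $x$ against $\tilde{\nu}_n$, monotone convergence in $\varepsilon$, and summation over $n$, together with the $\Delta_2$-based upper and lower bounds on $w_{(m)}$ that make $\mathfrak{L}$ well defined on $L^2_{w_{(m)}}(0,\infty)$, is exactly the standard proof. The paper itself does not prove this theorem but quotes it from \cite{kucik2015} (and \cite{jacob2013} for $m=0$), where the argument is essentially the one you give.
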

For $m=0$, this result was proved in \cite{jacob2013}, with some partial results appearing earlier in \cite{zen2009}, \cite{zen2010}, and also in \cite{das1994}, \cite{duren2007}. Allowing $m>0$ enables us to consider $L^2$ spaces with non-decreasing weights, such as $w_{(1)}(t)=1+t$, which by the virtue of the above theorem, applied to $\tilde{\nu}_0 = \delta_0/2\pi$ and $\tilde{\nu}_1$ being the weighted (with weight $1/\pi$) Lebesgue measure on $[0, \, \infty)$, corresponds to the Dirichlet space on $\mathbb{C}_+$.

If the choice of $(\nu_n)_{n=0}^m$ is implicit and unambiguous, we shall adopt the notation
\begin{displaymath}
	A^2_{(m)} = \mathfrak{L}\left(L^2_{w_{(m)}}(0, \, \infty) \right).
\end{displaymath}
This is a reproducing kernel Hilbert space (RKHS), with the kernel given by
\begin{equation}
k^{A^2_{(m)}}_z(\zeta) = \int_0^\infty \frac{e^{-t(\overline{z}+\zeta)}}{w_{(m)}(t)} \, dt \qquad \qquad (\forall (z, \zeta) \in \mathbb{C}_+),
\label{eq:kernel}
\end{equation}
for details see again \cite{kucik2015}.

\section{Kernel conditions}
\label{sec:kernels}
Since the space $A^2_{(m)}$ is a generalisation of the Dirichlet space, some of the methods used to characterise the Carleson measures for the latter space, can also be employed here. The classical Dirichlet space, $\mathcal{D}$, is defined on the open unit disk of the complex plane (with some obvious extensions to the $n$-dimensional case). The Carleson measures for $\mathcal{D}$ have been completely classified by D. Stegenga in \cite{stegenga1980}, using the notion of so-called logarithmic capacity. A number of other characterisations has been obtained later, and although none of them is particularly simple, we feel obliged to at least mention a paper \cite{arcozzi2002} by Arcozzi, Rochberg and Sawyer since some of the results given there have their half-plane counterparts which are proven in Section \ref{sec:trees} of this article. Many of these characterisations however rely heavily on the fact that $\mathbb{D}$ is bounded, and the Dirichlet spaces defined on unbounded domains are virtually never considered. For example, Stegenga's logarithmic capacity classification of Carleson measures is altogether unsuitable. But yet, some weaker results may be adopted to the $\mathbb{C}_+$ case. Let us consider the following adaptations of Theorem 5.2.2 (p. 76) from \cite{el-fallah2014}.

\begin{lemma}
Let $\mu$ be a positive Borel measure on $\mathbb{C}_+$, then
\small
\begin{equation}
	\sup_{\left\|F\right\|_{A^2_{(m)}} \leq 1} \int_{\mathbb{C}_+} |F(z)|^2 \, d\mu(z) = \sup_{\|G\|_{L^2(\mathbb{C}_+, \, \mu)} \leq 1} \left|\int_{\mathbb{C}_+} \int_{\mathbb{C}_+} k^{A^2_{(m)}}_z (\zeta) \, G(\zeta) \overline{G(z)} \, d\mu(z) \, d\mu(\zeta) \right|.
\label{eq:ransford}
\end{equation}
\normalsize
\end{lemma}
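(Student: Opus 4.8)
The plan is to recognise the left-hand side of \eqref{eq:ransford} as the squared operator norm of the canonical embedding $J_\mu\colon A^2_{(m)}\to L^2(\mathbb{C}_+,\mu)$, $J_\mu F=F$ (a legitimate map into measurable functions because $A^2_{(m)}$ is a RKHS, so that $\int_{\mathbb{C}_+}|F|^2\,d\mu$ is well defined in $[0,\infty]$), and the right-hand side as the squared operator norm of its adjoint; the identity then follows from $\|J_\mu\|=\|J_\mu^\ast\|$. Indeed, by definition the left-hand side equals $\sup_{\|F\|_{A^2_{(m)}}\le 1}\|J_\mu F\|_{L^2(\mu)}^2=\|J_\mu\|^2$, with the understanding that this is $+\infty$ precisely when $\mu\notin CM(A^2_{(m)})$; that degenerate case will be treated separately at the end.

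To compute the adjoint, fix $G\in L^2(\mathbb{C}_+,\mu)$ and $F\in A^2_{(m)}$; writing $F(z)=\langle F,k^{A^2_{(m)}}_z\rangle_{A^2_{(m)}}$ (the reproducing property, with kernel \eqref{eq:kernel}) and applying Fubini's theorem gives
\[\langle J_\mu F,G\rangle_{L^2(\mu)}=\int_{\mathbb{C}_+}F(z)\overline{G(z)}\,d\mu(z)=\Bigl\langle F,\ \int_{\mathbb{C}_+}G(z)\,k^{A^2_{(m)}}_z\,d\mu(z)\Bigr\rangle_{A^2_{(m)}},\]
so that $J_\mu^\ast G=\int_{\mathbb{C}_+}G(z)\,k^{A^2_{(m)}}_z\,d\mu(z)$, the integral understood weakly in $A^2_{(m)}$. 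Expanding the resulting norm and using $\langle k^{A^2_{(m)}}_z,k^{A^2_{(m)}}_\zeta\rangle_{A^2_{(m)}}=k^{A^2_{(m)}}_z(\zeta)$, one obtains for every $G$
\[\bigl\|J_\mu^\ast\overline{G}\bigr\|_{A^2_{(m)}}^2=\int_{\mathbb{C}_+}\int_{\mathbb{C}_+}k^{A^2_{(m)}}_z(\zeta)\,G(\zeta)\overline{G(z)}\,d\mu(z)\,d\mu(\zeta),\]
which is exactly the expression under the modulus on the right of \eqref{eq:ransford}; in particular that expression is a squared norm, hence real and nonnegative, so the modulus is superfluous. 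Since $G\mapsto\overline{G}$ is an isometric bijection of $L^2(\mathbb{C}_+,\mu)$, taking the supremum over $\|G\|_{L^2(\mu)}\le 1$ turns the right-hand side into $\sup_{\|H\|_{L^2(\mu)}\le 1}\|J_\mu^\ast H\|_{A^2_{(m)}}^2=\|J_\mu^\ast\|^2=\|J_\mu\|^2$, i.e.\ the left-hand side.

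The one point that needs care is the meaning of the vector-valued integral $J_\mu^\ast G=\int_{\mathbb{C}_+}G(z)\,k^{A^2_{(m)}}_z\,d\mu(z)$ and the interchange of integration with the inner product, especially since $\mu$ is not assumed Carleson. I would handle this by first restricting to $G$ bounded with compact support in $\mathbb{C}_+$: on such $G$ the integral is an honest Bochner integral, because $\mu$ is finite on compacta and $z\mapsto k^{A^2_{(m)}}_z$ is continuous, hence locally bounded, into $A^2_{(m)}$, and Fubini applies without difficulty. Such $G$ are dense in $L^2(\mathbb{C}_+,\mu)$, and an elementary truncation (take $G=\overline{F}\,\mathbf{1}_E/\|F\,\mathbf{1}_E\|_{L^2(\mu)}$ on compacta $E$ where $F$ is bounded) shows that both suprema in \eqref{eq:ransford} are unaffected by this restriction and that the left-hand side equals $\sup\{\,|\langle J_\mu F,G\rangle_{L^2(\mu)}|^2:\|F\|_{A^2_{(m)}}\le 1,\ \|G\|_{L^2(\mu)}\le 1\,\}$ even when it is infinite. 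If $\mu\in CM(A^2_{(m)})$ the identification $\mathrm{RHS}=\|J_\mu^\ast\|^2=\|J_\mu\|^2$ is then immediate; if $\mu\notin CM(A^2_{(m)})$ one argues the contrapositive: finiteness of the right-hand side would make $J_\mu^\ast$ bounded on the dense subspace of bounded compactly supported functions, hence extendable to a bounded operator whose Hilbert-space adjoint is $J_\mu$, forcing $\mu\in CM(A^2_{(m)})$. Thus both sides are $+\infty$ together, and the proof is complete. This functional-analytic bookkeeping around a possibly unbounded $J_\mu$ is the main (if modest) obstacle; the algebraic core is simply the computation of $J_\mu^\ast$ from the reproducing kernel.
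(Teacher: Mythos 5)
Your proof is correct, and its algebraic core --- identifying the left-hand side with $\|J_\mu\|^2$ and the right-hand side with $\|J_\mu^*\|^2$ via the reproducing property --- is exactly the argument the paper invokes (by citing Theorem 5.2.2 of the El-Fallah--Kellay--Mashreghi--Ransford book) for the case where the left-hand side is finite; you simply write it out. Where you genuinely diverge is in the degenerate case $\mu \notin CM(A^2_{(m)})$: the paper exhausts $\mathbb{C}_+$ by the compact boxes $\Omega_r$, observes that each restriction $\mu|_{\Omega_r}$ is Carleson because $\sup_{z\in\Omega_r}\bigl\|k_z^{A^2_{(m)}}\bigr\|<\infty$, applies the finite case to $\mu|_{\Omega_r}$, and lets $r\to\infty$, so that the left-hand side blows up while the right-hand side of each restricted identity is dominated by that of $\mu$. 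You instead argue by contraposition: finiteness of the right-hand side bounds $J_\mu^*$ on the dense subspace of bounded compactly supported $G$, hence $J_\mu^*$ extends to a bounded operator whose adjoint is $J_\mu$, forcing $\mu$ to be Carleson. Both routes are sound; the paper's exhaustion has the small advantage of never leaving the realm of bona fide Carleson measures (so the double integral is absolutely convergent whenever it is actually used), while your version is cleaner operator theory and makes explicit the density and truncation bookkeeping that the paper leaves implicit. Your remark that the modulus on the right-hand side is superfluous, since the quantity equals $\bigl\|J_\mu^*\overline{G}\bigr\|^2\ge 0$, is a correct observation not made in the paper.
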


\begin{proof}
If the LHS of \eqref{eq:ransford} is finite (i.e. $\mu$ is a Carleson measure for $A^2_{(m)}$), then the proof is essentially the same as the proof of Theorem 5.2.2 from \cite{el-fallah2014}. If it is not finite, let
\begin{displaymath}
	\Omega_r = \left\{x+iy \in \mathbb{C}_+ \; : \; \frac{1}{r} \leq x \leq r, \; -r \leq y \leq r \right\} \subset \mathbb{C}_+ \; \; \; \; \; (r > 0).
\end{displaymath}
Then
\begin{displaymath}
\int_{\mathbb{C}_+} |F|^2 \, d\mu|_{\Omega_r} \leq  \mu(\Omega_r) \sup_{z \in \Omega_r} \left\|k_z^{A^2_{(m)}}\right\|^2 \left\|F\right\|^2_{A^2_{(m)}} \; \; \; \; \; (F \in A^2_{(m)}),
\end{displaymath}
and hence $\mu|_{\Omega_r}$ (i.e. the restriction of $\mu$ to $\Omega_r$) is a Carleson measure for $A^2_{(m)}$, so we can use the first part of the proof, that is we are given that \eqref{eq:ransford}, to get
\begin{align*}
		\sup_{\left\|F\right\|_{A^2_{(m)}} \leq 1} \int_{\mathbb{C}_+} &|F(z)|^2 \, d\mu|_{\Omega_r}(z) \\
		= &\sup_{\|G\|_{L^2(\mathbb{C}_+, \, \mu)} \leq 1} \left|\int_{\mathbb{C}_+} \int_{\mathbb{C}_+} k^{A^2_{(m)}}_z (\zeta) \, G(\zeta) \overline{G(z)} \, d\mu|_{\Omega_r}(z) \, d\mu|_{\Omega_r}(\zeta) \right|,
\end{align*}
where the RHS is at most equal to the RHS of \eqref{eq:ransford} and the LHS tends to infinity as $r$ approaches infinity.
\end{proof}

\begin{prop}
\label{ransfordprop}
If
\begin{equation}
	\sup_{z \in \mathbb{C}_+} \int_{\mathbb{C}_+} \left|k_z^{A^2_{(m)}}\right| \, d\mu < \infty,
	\label{eq:weakercondition}
\end{equation}
then $\mu$ is a Carleson measure for $A^2_{(m)}$.
\end{prop}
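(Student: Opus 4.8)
The plan is to exploit the previous Lemma, which reduces the Carleson embedding constant to a bilinear expression involving the reproducing kernel. Given hypothesis \eqref{eq:weakercondition}, set $M := \sup_{z \in \mathbb{C}_+} \int_{\mathbb{C}_+} |k_z^{A^2_{(m)}}| \, d\mu$, which is finite. By the Lemma it suffices to show that the right-hand side of \eqref{eq:ransford} is bounded by (a constant multiple of) $M$. So I would take an arbitrary $G \in L^2(\mathbb{C}_+, \mu)$ with $\|G\|_{L^2(\mathbb{C}_+, \mu)} \leq 1$ and estimate the double integral $\left| \int_{\mathbb{C}_+} \int_{\mathbb{C}_+} k^{A^2_{(m)}}_z(\zeta) \, G(\zeta) \overline{G(z)} \, d\mu(z) \, d\mu(\zeta) \right|$.

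The key step is a Schur-test style argument. First bring the modulus inside: the double integral is at most $\int_{\mathbb{C}_+} \int_{\mathbb{C}_+} |k^{A^2_{(m)}}_z(\zeta)| \, |G(\zeta)| \, |G(z)| \, d\mu(z) \, d\mu(\zeta)$. Then apply the elementary inequality $|G(\zeta)||G(z)| \leq \tfrac{1}{2}(|G(\zeta)|^2 + |G(z)|^2)$, and use the symmetry $|k_z^{A^2_{(m)}}(\zeta)| = |k_\zeta^{A^2_{(m)}}(z)|$ (immediate from the explicit formula \eqref{eq:kernel}, since $k_z(\zeta)$ depends on $\overline{z} + \zeta$ and the integrand is real and positive) to see that both resulting terms are equal. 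This leaves a bound of $\int_{\mathbb{C}_+} |G(z)|^2 \left( \int_{\mathbb{C}_+} |k_z^{A^2_{(m)}}(\zeta)| \, d\mu(\zeta) \right) d\mu(z) \leq M \int_{\mathbb{C}_+} |G(z)|^2 \, d\mu(z) \leq M$. Combining with the Lemma gives $\sup_{\|F\|_{A^2_{(m)}} \leq 1} \int_{\mathbb{C}_+} |F|^2 \, d\mu \leq M < \infty$, which is exactly the Carleson criterion with $C(\mu) = M$.

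The main technical obstacle is justifying the interchange of the order of integration (Tonelli) and, more delicately, confirming that the inner integral in \eqref{eq:weakercondition} really controls $\int |k_z(\zeta)| \, d\mu(\zeta)$ as a function of $z$ uniformly — but this is precisely the content of the hypothesis, so the only genuine care needed is the measurability of $(z,\zeta) \mapsto |k_z^{A^2_{(m)}}(\zeta)|$ and positivity, both of which follow from the integral representation \eqref{eq:kernel}. One should also note in passing that the kernel is in fact pointwise positive here (the integrand $e^{-t(\overline z + \zeta)}/w_{(m)}(t)$ integrated against $G$ on the diagonal etc.), but the argument above only uses $|k_z(\zeta)|$, so no positivity of the kernel itself is required — only the symmetry of its absolute value. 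I expect the write-up to be short, essentially the three displayed inequalities above plus a sentence invoking the Lemma.
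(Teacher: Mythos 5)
Your proposal is correct and follows essentially the same route as the paper: invoke the preceding lemma to reduce the Carleson criterion to bounding the bilinear kernel form, then control that form by $M$ via a Schur-type estimate using the Hermitian symmetry $|k_z(\zeta)|=|k_\zeta(z)|$ (the paper uses Cauchy--Schwarz where you use $|G(\zeta)||G(z)|\leq\tfrac12(|G(\zeta)|^2+|G(z)|^2)$, an immaterial difference). One small slip: the integrand $e^{-t(\overline{z}+\zeta)}/w_{(m)}(t)$ is not real and positive for $z\neq\zeta$; the symmetry you need follows instead from $k_\zeta(z)=\overline{k_z(\zeta)}$, which is immediate from \eqref{eq:kernel}.
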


\begin{proof}
Let
\begin{equation}
	M := \sup_{z \in \mathbb{C}_+} \int_{\mathbb{C}_+} \left|k_z^{A^2_{(m)}}\right| \, d\mu.
\label{eq:M}
\end{equation}
Then for all $G \in L^2(\mathbb{C}_+, \, \mu)$
\begin{align}
\begin{split}
	&\left|\int_{\mathbb{C}_+} \int_{\mathbb{C}_+} k^{A^2_{(m)}}_z (\zeta) \, G(\zeta) \overline{G(z)} \, d\mu(z) \, d\mu(\zeta) \right| \\
	&\stackrel{\text{H\"{o}lder's}}{\leq} \left(\int_{\mathbb{C}_+} \int_{\mathbb{C}_+} \left|k^{A^2_{(m)}}_\zeta(z)\right| \, |G(\zeta)|^2 \, d\mu(z) \, d\mu(\zeta) \right)^{1/2} \\
	&\qquad \; \times \left(\int_{\mathbb{C}_+} \int_{\mathbb{C}_+} \left|k^{A^2_{(m)}}_z(\zeta)\right| \, |G(z)|^2 \, d\mu(z) \, d\mu(\zeta) \right)^{1/2} \\
	& \quad \stackrel{\eqref{eq:M}}{\leq} \quad M \|G\|^2_{L^2(\mathbb{C}_+, \, \mu)}.
	\end{split}
	\label{eq:MG}
\end{align}
Therefore
\begin{align*}
\int_{\mathbb{C}_+} \left(\frac{|H(z)|}{\left\|H\right\|_{A^2_{(m)}}}\right)^2 \, &d\mu(z) \\
&\leq \sup_{\left\|F\right\|_{A^2_{(m)}} \leq 1} \int_{\mathbb{C}_+} |F(z)|^2 \, d\mu(z) \\
&\stackrel{\eqref{eq:ransford}}{=} \sup_{\|G\|_{L^2(\mathbb{C}_+, \, \mu)} \leq 1} \left|\int_{\mathbb{C}_+} \int_{\mathbb{C}_+} k^{A^2_{(m)}}_z (\zeta) \, G(\zeta) \overline{G(z)} \, d\mu(z) \, d\mu(\zeta) \right| \\
&\stackrel{\eqref{eq:MG}}{\leq} M,
\end{align*}
for all $H \in A^2_{(m)}$, as required.
\end{proof}

In the RKHS case, in order to obtain necessary conditions for a measure to be Carleson, it is also a fairly standard practice to test Carleson criterion on reproducing kernels. However, in $A^2_{(m)}$ it often brings rather disappointing results, as the reproducing kernels of $A^2_{(m)}$ are seldom expressible as elementary functions (recall formulae \eqref{eq:wm} and \eqref{eq:kernel}). This can be overcome if the r\^{o}le of reproducing kernels of $A^2_{(m)}$ is assumed by the reproducing kernels of weighted Bergman spaces. Recall that the weighted Bergman space on the right complex half-plane, $\mathcal{B}^p_{\alpha}(\mathbb{C}_+)$, $\alpha~\geq~-1$, is the Banach space of analytic functions $F:\mathbb{C}_+ \longrightarrow \mathbb{C}$, such that
\begin{displaymath}
	\|F\|^p_{\mathcal{B}^p_{\alpha}} := \frac{1}{\pi} \int_{-\infty}^\infty \int_0^\infty |F(x+iy)|^p x^\alpha\, dx dy < \infty \qquad \qquad (\alpha > -1),
\end{displaymath}
and $\mathcal{B}^p_{-1}(\mathbb{C}_+) := H^p(\mathbb{C}_+)$, \cite{elliott2011}. If $p=2$, then $\mathcal{B}^2_{\alpha}(\mathbb{C}_+)$ is a RKHS, and
\begin{displaymath}
	k^{\mathcal{B}^2_{-1}(\mathbb{C}_+)}_z(\zeta) \defeq k^{H^2(\mathbb{C}_+)}_\zeta(z) \stackrel{\eqref{eq:kernel}}{=} \frac{1}{\overline{z}+\zeta} \quad \text{and} \quad k^{\mathcal{B}^2_{\alpha}(\mathbb{C}_+)}_z(\zeta) \stackrel{\eqref{eq:kernel}}{=} \frac{2^\alpha(1+\alpha)}{(\overline{z}+\zeta)^{2+\alpha}}, \; \alpha>-1,
\end{displaymath}
for all $(z, \, \zeta) \in \mathbb{C}_+^2$. We shall call all the functions of the form 
\begin{displaymath}
	K_\alpha(z, \, \zeta):=(\overline{z}+\zeta)^{-2-\alpha}, \qquad \qquad (z, \, \zeta) \in \mathbb{C}_+^2, \quad \alpha \geq -1,
\end{displaymath}
the \emph{Bergman kernels for the right complex half-plane}.

\begin{lemma}
Suppose that $m \in \mathbb{N}_0$. Then there exists $\alpha_0 \geq -1$ such that for all $(z, \, \zeta) \in \mathbb{C}_+^2$ and $\alpha \geq \alpha_0$, $K_\alpha(z, \zeta)$ is in $A^2_{(m)}$ (viewed as an analytic function in $\zeta$).
\label{bergmanlemma}
\end{lemma}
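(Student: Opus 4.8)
The plan is to realise $K_\alpha(z,\cdot)$ as a Laplace transform of an explicit weighted $L^2$ function and then check integrability. By Theorem~\ref{thm:mainthm} we have $A^2_{(m)} = \mathfrak{L}\big(L^2_{w_{(m)}}(0,\infty)\big)$, so it suffices to produce, for each fixed $z \in \mathbb{C}_+$, a function $f_{z,\alpha} \in L^2_{w_{(m)}}(0,\infty)$ with $\mathfrak{L}[f_{z,\alpha}] = K_\alpha(z,\cdot)$. The Gamma integral $\int_0^\infty e^{-st}t^{1+\alpha}\,dt = \Gamma(2+\alpha)s^{-2-\alpha}$, valid for $\re s > 0$ and $\alpha > -2$, applied with $s = \overline{z}+\zeta$ (which has $\re s = \re z + \re \zeta > 0$), gives
\begin{displaymath}
  K_\alpha(z,\zeta) = (\overline{z}+\zeta)^{-2-\alpha} = \mathfrak{L}\!\left[\, t \longmapsto \frac{t^{1+\alpha}e^{-\overline{z}t}}{\Gamma(2+\alpha)} \,\right]\!(\zeta) =: \mathfrak{L}[f_{z,\alpha}](\zeta).
\end{displaymath}
Thus the whole statement reduces to showing that, for $\alpha$ large enough (and the threshold can be taken independent of $z$, since $|f_{z,\alpha}(t)|^2 = t^{2+2\alpha}e^{-2(\re z)t}/\Gamma(2+\alpha)^2 \le t^{2+2\alpha}/\Gamma(2+\alpha)^2$ for every $z\in\mathbb{C}_+$), one has
\begin{displaymath}
  \|f_{z,\alpha}\|_{L^2_{w_{(m)}}}^2 = \frac{1}{\Gamma(2+\alpha)^2}\int_0^\infty t^{2+2\alpha}\,e^{-2(\re z)t}\,w_{(m)}(t)\,dt < \infty.
\end{displaymath}

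The second step is a two-sided size estimate on the weight $w_{(m)} = \sum_{n=0}^m w_n$ from \eqref{eq:wm}; here the assumption $m \in \mathbb{N}_0$, i.e.\ $m$ finite, is essential. For $t \ge 1$ we have $e^{-2rt} \le e^{-2r}$, hence $w_n(t) \le w_n(1)\,t^{2n}$ with $w_n(1) = 2\pi\int_0^\infty e^{-2r}\,d\tilde\nu_n(r) < \infty$ (finiteness because the $\Delta_2$-condition \eqref{eq:delta2} forces $\tilde\nu_n[0,r)$ to grow at most polynomially in $r$), so $w_{(m)}(t) \le C_\infty\, t^{2m}$ on $[1,\infty)$. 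For $0 < t \le 1$ I would use the standard consequence of \eqref{eq:delta2} that $\tilde\nu_n[0,2^j r) \le c_n^{\,j}\,\tilde\nu_n[0,r)$, combined with a dyadic splitting $\int_0^\infty e^{-2rt}\,d\tilde\nu_n(r) \le \tilde\nu_n[0,1/t) + \sum_{k\ge0} e^{-2^{k+1}}\,\tilde\nu_n[0,2^{k+1}/t)$, to get $\int_0^\infty e^{-2rt}\,d\tilde\nu_n(r) \le C_n\,\tilde\nu_n[0,1/t) \le C_n' t^{-\beta_n}$ with $\beta_n := \log_2 c_n \ge 0$; folding in the factor $t^{2n}$ yields $w_n(t) \le C_n''\, t^{2n-\beta_n}$ on $(0,1]$, hence $w_{(m)}(t) \le C_0\, t^{-\beta}$ there, where $\beta := \max\{0,\ \max_{0 \le n \le m}(\beta_n - 2n)\} < \infty$. (This estimate, and the corresponding lower bound, are essentially Lemma-type results from \cite{jacob2013}, \cite{kucik2015}.)

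Finally, combine the two regimes. The tail $\int_1^\infty t^{2+2\alpha}e^{-2(\re z)t}w_{(m)}(t)\,dt \le C_\infty \int_1^\infty t^{2+2\alpha+2m}e^{-2(\re z)t}\,dt$ converges for every $\alpha$ and every $z \in \mathbb{C}_+$ thanks to the exponential factor, whereas $\int_0^1 t^{2+2\alpha}w_{(m)}(t)\,dt \le C_0 \int_0^1 t^{2+2\alpha-\beta}\,dt$ converges precisely when $2+2\alpha-\beta > -1$. Hence $\alpha_0 := \max\{-1,\ \beta/2\}$ works: for every $\alpha \ge \alpha_0$ we obtain $f_{z,\alpha} \in L^2_{w_{(m)}}(0,\infty)$, and therefore $K_\alpha(z,\cdot) = \mathfrak{L}[f_{z,\alpha}] \in A^2_{(m)}$, with $\alpha_0$ independent of $z$. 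I expect the only delicate point to be the near-zero bound $w_{(m)}(t) = O(t^{-\beta})$ with finite $\beta$: this is exactly where \eqref{eq:delta2} and the finiteness of $m$ are used, and it dictates how large $\alpha_0$ must be; the rest is the explicit Laplace transform of a power and a convergence check dominated by $e^{-2(\re z)t}$.
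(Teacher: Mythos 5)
Your proof is correct and follows essentially the same route as the paper: both realise $K_\alpha(z,\cdot)$ as $\mathfrak{L}\bigl[t^{\alpha+1}e^{-t\overline{z}}/\Gamma(\alpha+2)\bigr]$ and reduce everything to a dyadic estimate driven by the \eqref{eq:delta2}-condition, which forces at most polynomial growth of $\tilde{\nu}_n[0,r)$ and hence dictates the threshold $\alpha_0$. The only difference is cosmetic: the paper applies Fubini to integrate out $t$ first and then bounds the resulting $r$-integral $\int_0^\infty (r+\re(z))^{-(2\alpha+2n+3)}\,d\tilde{\nu}_n(r)$, whereas you bound $w_{(m)}(t)$ pointwise by powers of $t$ in the two regimes $t\le 1$ and $t\ge 1$ before integrating in $t$ (and your parenthetical claim that one may simply drop the factor $e^{-2(\re z)t}$ is slightly loose, but harmless, since your actual tail estimate retains it).
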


\begin{proof}
For each $0 \leq n \leq m$, let
\begin{equation}
R_n : = \sup_{r>0} \frac{\tilde{\nu}_n[0, 2r)}{\tilde{\nu}_n[0, r)}.
\label{eq:Rn}
\end{equation}
Clearly, for all $r>0$
\begin{displaymath}
\tilde{\nu}_n[0, r) + \tilde{\nu}_n[r, 2r) = \tilde{\nu}_n[0, 2r) \stackrel{\eqref{eq:Rn}}\leq R_n \tilde{\nu}_n[0, r),
\end{displaymath}
so
\begin{equation}
\tilde{\nu}_n[r, 2r) \leq (R_n-1) \tilde{\nu}_n[0, r).
\label{eq:Rncondition}
\end{equation}
Choose $q >0$ such that
\begin{displaymath}
	2^q > \sup_{0 \leq n \leq m} R_n. 
\end{displaymath}
Define $g:[0, \infty) \longrightarrow (0, \infty)$ to be a step function such that $g(r) = \re(z)^{-q}$, if $0 \leq r<1$, and $g(r) = (2^j+\re(z))^{-q}$, if $r \in [2^j, \, 2^{j+1})$, for all $j \in \mathbb{N}_0$.
\begin{align*}
\int_0^\infty g(r) \, d\tilde{\nu}_n(r) &= \frac{\tilde{\nu}_n[0,1)}{\re(z)^q} + \sum_{j=0}^\infty \int_{[2^j, \, 2^{j+1})} \frac{d\tilde{\nu}_n(r)}{(2^j+\re(z))^q} \\
&\defeq \frac{\tilde{\nu}_n[0, 1)}{\re(z)^q} + \sum_{j=0}^\infty \frac{\tilde{\nu}_n[2^j, 2^{j+1})}{(2^j+\re(z))^q} \\
&\stackrel{\eqref{eq:Rncondition}}{\leq} \frac{\tilde{\nu}_n[0, 1)}{\re(z)^q} + (R_n-1) \sum_{j=0}^\infty \frac{\tilde{\nu}_n[0, 2^j)}{(2^j+\re(z))^q} \\
&\stackrel{\eqref{eq:Rn}}{\leq} \tilde{\nu}_n[0, 1) \left( \frac{1}{\re(z)^q}+ (R_n-1) \sum_{j=0}^\infty \left(\frac{R_n}{2^q}\right)^j \right) < \infty,
\end{align*}
for all $0 \leq n \leq m$. It follows that
\begin{align*}
\int_0^\infty \left|t^{\alpha+1} e^{-t\overline{z}} \right|^2 w_n(t) \, dt &= 2\pi \int_0^\infty \int_0^\infty t^{2(\alpha+n+1)} e^{-2t(r+\re(z))} \, dt \, d\tilde{\nu}_n(r) \\
 &= \frac{\pi \Gamma(2\alpha+2n+3)}{2^{2\alpha+2n+2}} \int_0^\infty \frac{d\tilde{\nu}_n(r)}{(r+\re(z))^{2\alpha+2n+3}}  \\
&\leq \frac{\pi \Gamma(2\alpha+2n+3)}{2^{2\alpha+2n+2}}\int_0^\infty g(r) \, d\tilde{\nu}_n(r) < \infty,
\end{align*}
whenever $\alpha \geq \alpha_0 := (q-3)/2$. And consequently, by Theorem \ref{thm:mainthm}, we have
\begin{displaymath}
\mathfrak{L}\left[ \frac{t^{\alpha+1} e^{-t\overline{z}}}{\Gamma(\alpha+2)} \right](\zeta) = \frac{1}{(\overline{z}+\zeta)^{\alpha+2}} = K_\alpha(z, \, \zeta)\; \in A^2_{(m)}.
\end{displaymath}
\end{proof}

\begin{definition}
Let $a \in \mathbb{C}_+$. A \emph{Carleson square centred at $a$} is defined to be the subset
\begin{equation}
Q(a) := \left\{z = x+iy \; : \; 0<x\leq 2\re(a), \, |y-\im(a)| \leq \re(a)\right\}
\label{eq:carlesonsquare}
\end{equation}
of the open right complex half-plane.
\end{definition}

\begin{theorem}
Suppose that $m \in \mathbb{N}_0$. If $\mu$ is a Carleson measure for the space $A^2(\mathbb{C}_+, \, (\nu_n)_{n=0}^m)$, then there exists a constant $C(\mu)>0$ such that
\begin{equation}
	\mu(Q(a)) \leq C(\mu) \sum_{n=0}^m \frac{\nu_n\left(\overline{Q(a)}\right)}{\re(a)^{2n}},
	\label{eq:sufficientcarlesonforA2}
\end{equation}
for all $a \in \mathbb{C}_+$. Here $\overline{Q(a)}$ denotes the closure of $Q(a)$ in $\mathbb{C}$.
\label{carlesonforA2}
\end{theorem}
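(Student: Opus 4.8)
The plan is to test the Carleson criterion \eqref{eq:carlesonembedding} on a single Bergman kernel. Fix $a \in \mathbb{C}_+$, write $h := \re(a)$, and choose $\alpha \geq \alpha_0$ with $\alpha_0$ as in Lemma \ref{bergmanlemma}, so that $F := K_\alpha(a,\cdot)$, i.e. $F(\zeta) = (\overline{a}+\zeta)^{-2-\alpha}$, lies in $A^2_{(m)}$. For this $F$ the Carleson criterion reads $\int_{\mathbb{C}_+}|F|^2\,d\mu \leq C(\mu)\|F\|^2_{A^2_{(m)}}$, so it will be enough to bound the left-hand side below by a constant times $h^{-2(2+\alpha)}\mu(Q(a))$ and the right-hand side above by a constant times $h^{-2(2+\alpha)}\sum_{n=0}^m\nu_n(\overline{Q(a)})/h^{2n}$; the common factor $h^{-2(2+\alpha)}$ then cancels and \eqref{eq:sufficientcarlesonforA2} drops out.

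For the lower bound, if $\zeta = x+iy \in Q(a)$ then $h < \re(\overline{a}+\zeta) = h+x \leq 3h$ and $|\im(\overline{a}+\zeta)| = |y-\im(a)| \leq h$, hence $|\overline{a}+\zeta| \leq \sqrt{10}\,h$, so $|F(\zeta)|^2 \geq (10h^2)^{-(2+\alpha)}$ on $Q(a)$ and therefore $\int_{\mathbb{C}_+}|F|^2\,d\mu \geq 10^{-(2+\alpha)}h^{-2(2+\alpha)}\mu(Q(a))$.

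For the upper bound, recall $\|F\|^2_{A^2_{(m)}} = \sum_{n=0}^m\|F^{(n)}\|^2_{A^2_{\nu_n}}$. Differentiating, $F^{(n)}(\zeta) = c_n(\overline{a}+\zeta)^{-2-\alpha-n}$ with $|c_n| = \Gamma(2+\alpha+n)/\Gamma(2+\alpha)$; since $\varepsilon\mapsto|\overline{a}+\varepsilon+\zeta|$ is increasing on $(0,\infty)$, the supremum over $\varepsilon>0$ defining $\|\cdot\|_{A^2_{\nu_n}}$ is attained in the limit $\varepsilon\to0^+$, so by monotone convergence $\|F^{(n)}\|^2_{A^2_{\nu_n}} = |c_n|^2\int_{\mathbb{C}_+}|\overline{a}+\zeta|^{-2(2+\alpha+n)}\,d\nu_n(\zeta)$. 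The heart of the argument is to estimate this integral by a dyadic shell decomposition: set $Q_0 := Q(a)$ and $Q_j := \{x+iy : 0<x\leq 2^{j+1}h,\ |y-\im(a)|\leq 2^jh\}$ for $j\geq1$, so that $\mathbb{C}_+ = \bigcup_{j\geq0}(Q_j\setminus Q_{j-1})$ with $Q_{-1}:=\emptyset$. On $Q_0$ one has $|\overline{a}+\zeta|\geq h$, on $Q_j\setminus Q_{j-1}$ one has $|\overline{a}+\zeta|\geq 2^{j-1}h$, and $\nu_n(Q_j) = 2^{j+1}h\,\tilde{\nu}_n[0,2^{j+1}h)$, which by $j$-fold application of the $\Delta_2$-condition (see \eqref{eq:Rn}) is at most $(2R_n)^j\nu_n(\overline{Q(a)})$ for all $j\geq0$. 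Summing the resulting geometric series gives $\int_{\mathbb{C}_+}|\overline{a}+\zeta|^{-2(2+\alpha+n)}\,d\nu_n(\zeta)\leq C_n h^{-2(2+\alpha+n)}\nu_n(\overline{Q(a)})$ for a finite constant $C_n$ depending only on $\alpha$, $n$ and $R_n$; hence $\|F\|^2_{A^2_{(m)}}\leq h^{-2(2+\alpha)}\bigl(\max_{0\leq n\leq m}|c_n|^2C_n\bigr)\sum_{n=0}^m\nu_n(\overline{Q(a)})/h^{2n}$, and combining with the lower bound finishes the proof, with the constant in \eqref{eq:sufficientcarlesonforA2} equal to $10^{2+\alpha}\max_n|c_n|^2C_n$ times the Carleson embedding constant of $\mu$.

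The main obstacle is precisely the convergence of that geometric series: the $\nu_n$-mass of the enlarged box $Q_j$ grows like $(2R_n)^j$ — this is exactly the information the $\Delta_2$-condition provides — and one needs this to be beaten by the decay $2^{-2(2+\alpha+n)j}$ of $|\overline{a}+\zeta|^{-2(2+\alpha+n)}$ on the $j$-th shell, i.e. one needs $2^{2(2+\alpha+n)-1}>R_n$ for each $0\leq n\leq m$. This holds once $\alpha$ is large enough, and in fact for every $\alpha\geq\alpha_0$ by the way $\alpha_0$ was chosen in the proof of Lemma \ref{bergmanlemma}; everything else (the lower bound, the differentiation, and the passage to the $\varepsilon\to0^+$ limit) is routine.
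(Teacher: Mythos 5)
Your proof is correct, and it follows the same overall strategy as the paper's: test the embedding on the Bergman kernel $K_\alpha(a,\cdot)=\Gamma(\alpha+2)^{-1}\mathfrak{L}\left[t^{\alpha+1}e^{-t\overline{a}}\right]$ (the paper's test function $\mathfrak{L}[t^\beta e^{-t\overline{a}}]$ with $\beta=\alpha+1$), bound the left-hand side below on $Q(a)$ via $|z+\overline{a}|\leq\sqrt{10}\,\re(a)$, and control the $A^2_{(m)}$-norm using the \eqref{eq:delta2}-condition and a geometric series. The one genuine difference is in how that norm is estimated. The paper pulls it back to $L^2_{w_{(m)}}(0,\infty)$ via Theorem \ref{thm:mainthm}, applies Fubini to reduce to $\int_0^\infty (r+\re(a))^{-(2\beta+2n+1)}\,d\tilde{\nu}_n(r)$, and then runs a one-dimensional dyadic decomposition of the radial variable exactly as in the proof of Lemma \ref{bergmanlemma}; you instead compute each Zen norm $\|F^{(n)}\|^2_{A^2_{\nu_n}}$ directly in the half-plane (justifying the passage $\varepsilon\to 0^+$ by monotonicity) and decompose $\mathbb{C}_+$ into planar dyadic shells $Q_j\setminus Q_{j-1}$. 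The two routes are equivalent in substance --- both reduce to the same convergence requirement $R_n<2^{2\alpha+2n+3}$, which, as you correctly note, holds for every $\alpha\geq\alpha_0=(q-3)/2$ --- but yours avoids invoking the Laplace isometry and the explicit weight formula \eqref{eq:wm}, at the cost of handling the supremum over $\varepsilon$ in the Zen norm and of one small point of care: $\nu_n$ may charge the boundary $i\mathbb{R}$ (e.g.\ $\tilde{\nu}_0=\delta_0/2\pi$ for the Hardy component), so your shells and the identity $\nu_n(Q_j)=2^{j+1}h\,\tilde{\nu}_n[0,2^{j+1}h)$ should be read with the left edge included, consistently with the closed square $\overline{Q(a)}$ appearing in \eqref{eq:sufficientcarlesonforA2}. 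This is a cosmetic adjustment, not a gap.
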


\begin{proof}
Given $a \in \mathbb{C}_+$, we have that for each $z \in Q(a)$,
\begin{equation}
|z+\overline{a}| \defeq \sqrt{(\re(z)+\re(a))^2+(\im(z)-\im(a)^2)} \stackrel{\eqref{eq:carlesonsquare}}{\leq} \sqrt{10}\re(a).
\label{eq:carlesonsq}
\end{equation}
We also know, by Lemma \ref{bergmanlemma}, that there exists $\beta \geq 0$ such that \\ $t^\beta e^{-t\overline{a}} \in L^2_{w_{(m)}}(0, \, \infty)$, and
\begin{equation}
	\left\|\mathfrak{L}[t^\beta e^{-t\overline{a}}]\right\|^2_{L^2(\mathbb{C}_+, \, \mu)} \geq (\Gamma(\beta+1))^2 \int_{Q(a)} \frac{d\mu(z)}{|z+\overline{a}|^{2(\beta+1)}} \stackrel{\eqref{eq:carlesonsq}}{\geq} \frac{(\Gamma(\beta+1))^2\mu(Q(a))}{10^{\beta+1}\re(a)^{2(\beta+1)}}.
\label{eq:lowercarlesonestimate}
\end{equation}
On the other hand
\begin{align*}
\left\|\mathfrak{L}[t^\beta e^{-t\overline{a}}]\right\|^2_{A^2(\mathbb{C}_+, \, (\nu_n)_{n=0}^m)} &\stackrel{\text{Th\textsuperscript{\underline{m}} }\ref{thm:mainthm}}{=} \left\|t^\beta e^{-t\overline{a}}\right\|^2_{L^2_{w_{(m)}(0, \infty)}} = \int_0^\infty t^{2\beta}e^{-2\re(a)t} w_{(m)}(t) \, dt\\
&\stackrel{\eqref{eq:wm}}{=} 2\pi \sum_{n=0}^m \int_0^\infty \int_0^\infty t^{2(\beta+n)} e^{-2(r+\re(a))t} \, dt \, d\tilde{\nu}_n(r) \\
& = 2\pi \sum_{n=0}^m \frac{\Gamma(2\beta+2n+1)}{2^{2\beta+2n+1}} \int_0^\infty \frac{d\tilde{\nu}_n(r)}{(r+\re(a))^{2\beta+2n+1}}.
\end{align*}
And again, letting $R_n$ be defined like in \eqref{eq:Rn}, for all $0\leq n \leq m$, and using essentially the same method as that in the proof of Lemma \ref{bergmanlemma} (with $\tilde{\nu}_n[0, \re(a))$ instead of $\tilde{\nu}_n[0, 1)$, for each $0\leq n\leq m$), we get that the last expression is less or equal to:
\begin{displaymath}
2\pi \sum_{n=0}^m \frac{\Gamma(2\beta+2n+1)\tilde{\nu}_n[0, \re(a))}{(2\re(a))^{2\beta+2n+1}}  \left( 1 + (R_n-1) \sum_{j=0}^\infty \frac{R_n^j}{(1+2^j)^{2\beta+2n+1}} \right),
\end{displaymath}
and the series converges for $\beta$ sufficiently large. Therefore, combining this with \eqref{eq:lowercarlesonestimate}, we get
\begin{displaymath}
	\mu \left(Q(a)\right) \leq C(\mu) \sum_{n=0}^m \frac{2\re(a)^{2(\beta+1)}\tilde{\nu}_n[0, \re(a))}{\re(a)^{2\beta+2n+1}} \leq C(\mu) \sum_{n=0}^m \frac{\nu_n\left(\overline{Q(a)}\right)}{\re(a)^{2n}}, 
\end{displaymath}
where
\small
\begin{displaymath}
	C(\mu) := \frac{10^{\beta+1}\pi\Gamma(2\beta+2m+1)}{2^{2\beta-1}(\Gamma(\beta+1))^2} \left[ 1 + \max_{0\leq n \leq m}\left((R_n-1) \sum_{j=0}^\infty \frac{R_n^j}{(1+2^j)^{2\beta+2n+1}} \right)\right]C,
\end{displaymath}
\normalsize
and $C>0$ is a Carleson constant from the embedding
\begin{displaymath}
	A^2(\mathbb{C}_+, \, (\nu_n)_{n=0}^m)~\hookrightarrow~L^2(\mathbb{C}_+, \, \mu).
\end{displaymath}
\end{proof}

In \cite{jacob2013} the condition \eqref{eq:sufficientcarlesonforA2} was proved to be equivalent to the Carleson criterion, if $m=0$ (i.e. for Zen spaces). It is not clear if this remains true for $m>1$.

\section{The Dirichlet space(s)}
\label{sec:dirichlet}

Let us now consider a particularly well-known example of $A^2_{(1)}$, corresponding to measures $\tilde{\nu}_0 = \delta_0/2\pi$ and $\tilde{\nu}_1$ being the weighted Lebesgue measure on $[0, \, \infty)$ (with weight $1/\pi$), or alternatively to the Laplace image of $L^2_{1+t}(0, \, \infty)$. That is the Dirichlet space $\mathcal{D}(\mathbb{C}_+)$. The previous section provides us with some information about the set of Carleson measures for $\mathcal{D}(\mathbb{C}_+)$.

\begin{prop}
Let $\mu$ be a positive Borel measure on $\mathbb{C}_+$. Then
\begin{enumerate}
	\item If for each $a \in \mathbb{C}_+$
	\begin{equation}
		\mu(Q(a)) = O(\re(a)),
		\label{eq:carleson1}
	\end{equation}
	then $\mu$ is a Carleson measure for  $\mathcal{D}(\mathbb{C}_+)$.
	\item If $\mu$ is a Carleson measure for  $\mathcal{D}(\mathbb{C}_+)$, then
	\begin{displaymath}
		\mu(Q(a)) = O(\re(a)+1)
	\end{displaymath}
	for each $a \in \mathbb{C}_+$.
\end{enumerate}
\end{prop}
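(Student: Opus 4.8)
The plan is to derive both parts as corollaries of results already established, the key observation being the contractive inclusion $\mathcal{D}(\mathbb{C}_+) \hookrightarrow H^2(\mathbb{C}_+)$. For part (1): since $w_{(1)}(t) = 1+t \geq 1$, Theorem~\ref{thm:mainthm} realises $\mathcal{D}(\mathbb{C}_+)$ as $\mathfrak{L}(L^2_{1+t}(0,\infty))$ and $H^2(\mathbb{C}_+)$ as $\mathfrak{L}(L^2(0,\infty))$, and $\|f\|_{L^2(0,\infty)} \leq \|f\|_{L^2_{1+t}(0,\infty)}$; equivalently, $\|F\|_{H^2(\mathbb{C}_+)} \leq \|F\|_{\mathcal{D}(\mathbb{C}_+)}$ is immediate from the defining norm of $\mathcal{D}(\mathbb{C}_+) = A^2(\mathbb{C}_+,(\nu_0,\nu_1))$, of which $\|F\|^2_{A^2_{\nu_0}} = \|F\|^2_{H^2(\mathbb{C}_+)}$ is one summand. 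Hence every Carleson measure for $H^2(\mathbb{C}_+)$ is one for $\mathcal{D}(\mathbb{C}_+)$, with no larger constant: $CM(H^2(\mathbb{C}_+)) \subseteq CM(\mathcal{D}(\mathbb{C}_+))$. It then remains to observe that \eqref{eq:carleson1} is exactly the $m=0$ case of \eqref{eq:sufficientcarlesonforA2} for $\tilde{\nu}_0 = \delta_0/2\pi$: indeed $\nu_0(\overline{Q(a)}) = (\tfrac{1}{2\pi}\delta_0 \otimes \lambda)(\overline{Q(a)})$ equals $\tfrac{1}{2\pi}$ times the $\lambda$-length of the vertical segment $\{0\}\times i[\im(a)-\re(a),\,\im(a)+\re(a)]$, namely $\re(a)/\pi$, so \eqref{eq:sufficientcarlesonforA2} reads $\mu(Q(a)) \leq C(\mu)\re(a)/\pi$. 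By the half-plane Carleson embedding theorem — the $m=0$ equivalence of \eqref{eq:sufficientcarlesonforA2} with the Carleson criterion recorded after Theorem~\ref{carlesonforA2}, due to \cite{jacob2013} — this condition characterises $CM(H^2(\mathbb{C}_+))$. Chaining the implications: \eqref{eq:carleson1} $\Rightarrow \mu \in CM(H^2(\mathbb{C}_+)) \Rightarrow \mu \in CM(\mathcal{D}(\mathbb{C}_+))$.

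For part (2) I would apply Theorem~\ref{carlesonforA2} with $m=1$ to the data $\tilde{\nu}_0 = \delta_0/2\pi$, $d\tilde{\nu}_1(r) = dr/\pi$ defining $\mathcal{D}(\mathbb{C}_+)$, obtaining
\[
\mu(Q(a)) \leq C(\mu)\left( \nu_0(\overline{Q(a)}) + \frac{\nu_1(\overline{Q(a)})}{\re(a)^2} \right).
\]
As above $\nu_0(\overline{Q(a)}) = \re(a)/\pi$, while $\nu_1 = \tilde{\nu}_1 \otimes \lambda$ is $\tfrac{1}{\pi}$ times planar Lebesgue measure, so $\nu_1(\overline{Q(a)}) = \tfrac{1}{\pi}(2\re(a))^2 = 4\re(a)^2/\pi$. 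Substituting gives $\mu(Q(a)) \leq \tfrac{C(\mu)}{\pi}(\re(a)+4)$, which is $O(\re(a)+1)$.

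Neither part presents a genuine obstacle once the earlier machinery is granted; the only computational content is the two elementary evaluations $\nu_0(\overline{Q(a)}) = \re(a)/\pi$ and $\nu_1(\overline{Q(a)}) = 4\re(a)^2/\pi$. It is worth remarking why one should \emph{not} attempt part (1) through Proposition~\ref{ransfordprop}: the Dirichlet kernel $k^{\mathcal{D}(\mathbb{C}_+)}_z(\zeta) = \int_0^\infty e^{-t(\overline{z}+\zeta)}/(1+t)\,dt$ satisfies only $|k^{\mathcal{D}(\mathbb{C}_+)}_z(\zeta)| \asymp |\overline{z}+\zeta|^{-1}$ (seen by one integration by parts in $t$), and $\sup_{z}\int_{\mathbb{C}_+}|\overline{z}+\zeta|^{-1}\,d\mu(\zeta)$ already diverges for, e.g., arc-length measure on a vertical line — a measure that does satisfy \eqref{eq:carleson1}. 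So Proposition~\ref{ransfordprop} is strictly weaker here, and the contractive embedding into $H^2(\mathbb{C}_+)$ is the right device; the one point to state carefully is its direction, namely that it is the Dirichlet norm that must dominate, so that Carleson measures transfer \emph{from} $H^2(\mathbb{C}_+)$ \emph{to} $\mathcal{D}(\mathbb{C}_+)$.
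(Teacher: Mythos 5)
Your proof is correct and follows essentially the same route as the paper: part (1) by noting that \eqref{eq:carleson1} characterises $CM(H^2(\mathbb{C}_+))$ and that the Dirichlet norm dominates the Hardy norm, so Carleson measures transfer from $H^2(\mathbb{C}_+)$ to $\mathcal{D}(\mathbb{C}_+)$; part (2) by applying Theorem \ref{carlesonforA2} with $m=1$ and evaluating $\nu_0(\overline{Q(a)})$ and $\nu_1(\overline{Q(a)})$ exactly as the paper does. (Only your tangential aside is off: $k_z^{\mathcal{D}(\mathbb{C}_+)}(\zeta)=e^{s}E_1(s)$ with $s=\overline{z}+\zeta$ grows like $\log(1/|s|)$, not $|s|^{-1}$, as $s\to 0$, since the $1/s$ terms cancel after the integration by parts; this does not affect your argument.)
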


\begin{proof}
If \eqref{eq:carleson1} holds, then $\mu$ is a Carleson measure for the Hardy space $H^2(\mathbb{C}_+)$, so it must also be a Carleson measure for $\mathcal{D}(\mathbb{C}_+)$. Conversely, if it is a Carleson measure for $\mathcal{D}(\mathbb{C}_+)$, then, by the previous theorem, there exists a constant $C(\mu)>0$ such that
\begin{displaymath}
	\mu(Q(a)) \stackrel{\eqref{eq:sufficientcarlesonforA2}}{\leq} C(\mu) \left(\nu_0\left(\overline{Q(a)}\right) + \frac{\nu_1\left(\overline{Q(a)}\right)}{\re(a)^2}\right) \leq 2C(\mu) \left(\re(a) + 2 \right).
\end{displaymath}
\end{proof}

On the open unit disk of the complex plane the Dirichlet space, $\mathcal{D}$, is defined to be the Banach space of analytic functions with derivatives in the (unweighted) Bergman space, $\mathcal{B}$. The quantity
\begin{equation}
\int_{\mathbb{D}} |f'(z)|^2 \, dz \qquad \qquad \qquad (f \in \mathcal{D})
\label{eq:dirichletseminorm}
\end{equation}
is a seminorm on $\mathcal{D}$. A norm on $\mathcal{D}$ can be defined by adding an absolute value of the evaluation of $f$ at a constant to \eqref{eq:dirichletseminorm} or by adding to it the Hardy norm, $\|\cdot\|_2$, (it is always possible, since $\mathcal{D} \subset H^2$). On the disk these two norms are equivalent \cite{arcozzi2011}, \cite{el-fallah2014}. On the complex half-plane, however, it not the case.

Let us consider the following variant of the Dirichlet space on the right complex half-plane: given $\alpha \in \mathbb{C}_+$, let
\begin{displaymath}
	\mathcal{D}^\alpha(\mathbb{C}_+) := \left\{F : \mathbb{C}_+ \longrightarrow \mathbb{C} \; \text{analytic} \; : \; \|F'\|^2_{\mathcal{B}^2_0(\mathbb{C}_+)} \defeq \int_{\mathbb{C}_+} |F'(z)|^2 \, \frac{dz}{\pi} < \infty \right\},
\end{displaymath}
with the inner product given by
\begin{displaymath}
	\left\langle F, \, G \right\rangle_{\mathcal{D}^\alpha(\mathbb{C}_+)} := \left\langle F', \, G' \right\rangle_{\mathcal{B}^2_0(\mathbb{C}_+)} + F(\alpha)\overline{G(\alpha)}.
\end{displaymath}
It is a reproducing kernel Hilbert space and we can find its reproducing kernel, $k_z^{\mathcal{D}^\alpha(\mathbb{C}_+)}$, using the following method. Let $F \in \mathcal{D}^\alpha(\mathbb{C}_+)$ be such that $F' = \mathfrak{L}'[f]$ for some $f \in L^2_{1/t}(0, \, \infty)$. Then
\begin{displaymath}
F'(z) = \left\langle F', \, k_z^{\mathcal{B}^2_0(\mathbb{C}_+)} \right\rangle_{\mathcal{B}^2_0(\mathbb{C}_+)} = \frac{d}{dz} \left\langle F, \, k_z^{\mathcal{D}^\alpha(\mathbb{C}_+)} \right\rangle_{\mathcal{D}^\alpha(\mathbb{C}_+)}.
\end{displaymath}
So by the Fundamental Theorem of Calculus,
\begin{align*}
	F(z) &= \int_{\mathbb{C}_+} F'(\zeta) \int_\alpha^z \frac{d\xi}{\pi(\xi +\overline{\zeta})^2} \, d\zeta + F(\alpha) \\
	&= \left\langle F', \, \left(k_z^{\mathcal{D}^\alpha(\mathbb{C}_+)}\right)' \right\rangle_{\mathcal{B}^2_0(\mathbb{C}_+)} + F(\alpha)k_z^{\mathcal{D}^\alpha(\mathbb{C}_+)}(\alpha).
\end{align*}
And by the uniqueness property of reproducing kernels (\cite{aronszajn1950}, \cite{paulsen2009}),
\begin{displaymath}
	\left(k_z^{\mathcal{D}^\alpha(\mathbb{C}_+)}\right)'(\zeta) = \overline{\int_\alpha^z \frac{d\xi}{\pi(\xi +\overline{\zeta})^2}} = \frac{1}{\pi}\left(\frac{1}{\overline{\alpha}+\zeta} - \frac{1}{\overline{z}+\zeta}\right) \quad \text{and} \quad k_z^{\mathcal{D}^\alpha(\mathbb{C}_+)}(\alpha)=1.
\end{displaymath}
And thus
\begin{displaymath}
	k_z^{\mathcal{D}^\alpha(\mathbb{C}_+)}(\zeta) = \int_\alpha^\zeta \left(\frac{1}{\overline{\alpha}+\xi} - \frac{1}{\overline{z}+\xi}\right) \, \frac{d\xi}{\pi} + 1 = \ln \left(\frac{(\overline{\alpha}+\zeta)(\alpha +\overline{z})}{2\pi\re(\alpha)(\overline{z}+\zeta)}\right)+1.
\end{displaymath}
For any $\beta \in \mathbb{C}_+$, $\|\cdot\|_{\mathcal{D}^\beta(\mathbb{C}_+)}$ is an equivalent norm on $\mathcal{D}^\alpha(\mathbb{C}_+)$, i.e. for all $F \in \mathcal{D}^\alpha(\mathbb{C}_+)$,
\begin{align*}
\|F\|^2_{\mathcal{D}^\alpha(\mathbb{C}_+)} &\defeq \left\| F' \right\|^2_{\mathcal{B}_0(\mathbb{C}_+)} + |F(\alpha)|^2 \\
&\defeq \left\| F' \right\|^2_{\mathcal{B}_0(\mathbb{C}_+)} + \left|\left\langle F, \, k^{\mathcal{D}^\beta(\mathbb{C}_+)}_\alpha \right\rangle_{\mathcal{D}^\beta(\mathbb{C}_+)}\right|^2 \\
&\stackrel{\text{Cauchy-Schwarz}}{\leq} \left\| F' \right\|^2_{\mathcal{B}_0(\mathbb{C}_+)} + \left\|F\right\|^2_{\mathcal{D}^\beta(\mathbb{C}_+)} \left\|k_\alpha^{\mathcal{D}^\beta(\mathbb{C}_+)}\right\|^2_{\mathcal{D}^\beta(\mathbb{C}_+)} \\
&\leq \left(1+\left\|k_\alpha^{\mathcal{D}^\beta(\mathbb{C}_+)}\right\|^2_{\mathcal{D}^\beta(\mathbb{C}_+)}\right) \left\|F\right\|^2_{\mathcal{D}^\beta(\mathbb{C}_+)}.
\end{align*}
The set $\mathcal{D}^\alpha(\mathbb{C}_+)$ properly contains $\mathcal{D}(\mathbb{C}_+)$, since
\begin{align}
\begin{split}
	\left\|\mathfrak{L}[f]\right\|^2_{\mathcal{D}^\alpha(\mathbb{C}_+)} &\defeq	 \int_0^\infty |f(t)|^2 t \, dt + \left|\int_0^\infty f(t)e^{-\alpha t} \, dt \right|^2 \\
	&\leq \max \left\{1, \frac{1}{2\re(\alpha)} \right\}\int_0^\infty |f(t)|^2 (1+t) \, dt,
	\end{split}
	\label{eq:dirichlets}
\end{align}
and all the constant functions belong to $\mathcal{D}^\alpha(\mathbb{C}_+)$, while they cannot be in $\mathcal{D}(\mathbb{C}_+)$, because they are not in $H^2(\mathbb{C}_+)$. Moreover $\mathcal{D}(\mathbb{C}_+) \subset \mathcal{D}^\alpha(\mathbb{C}_+) \setminus \mathbb{C}$, since for example $(z+1)^{1/2} \in \mathcal{D}^\alpha(\mathbb{C}_+) \setminus \left(H^2(\mathbb{C}_+) \cup \mathbb{C} \right)$.

\begin{prop}
For all $\alpha \in \mathbb{C}_+$ we have that $CM(\mathcal{D}^\alpha(\mathbb{C}_+)) \subset CM(\mathcal{D}(\mathbb{C}_+))$, and the inclusion is proper.
\end{prop}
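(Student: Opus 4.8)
The plan is to split the statement into the inclusion $CM(\mathcal{D}^\alpha(\mathbb{C}_+)) \subseteq CM(\mathcal{D}(\mathbb{C}_+))$ and the properness of that inclusion, treating them separately.

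For the inclusion I would simply transport the continuous embedding $\mathcal{D}(\mathbb{C}_+) \hookrightarrow \mathcal{D}^\alpha(\mathbb{C}_+)$ through the Carleson criterion. That embedding is exactly what \eqref{eq:dirichlets} records: for every $F = \mathfrak{L}[f] \in \mathcal{D}(\mathbb{C}_+)$ one has $\|F\|^2_{\mathcal{D}^\alpha(\mathbb{C}_+)} \leq c_\alpha \|F\|^2_{\mathcal{D}(\mathbb{C}_+)}$ with $c_\alpha := \max\{1, (2\re(\alpha))^{-1}\}$. Hence, if $\mu \in CM(\mathcal{D}^\alpha(\mathbb{C}_+))$ with Carleson constant $C(\mu)$, then for each $F \in \mathcal{D}(\mathbb{C}_+) \subseteq \mathcal{D}^\alpha(\mathbb{C}_+)$ we get $\int_{\mathbb{C}_+} |F|^2 \, d\mu \leq C(\mu)\|F\|^2_{\mathcal{D}^\alpha(\mathbb{C}_+)} \leq c_\alpha C(\mu)\|F\|^2_{\mathcal{D}(\mathbb{C}_+)}$, so $\mu \in CM(\mathcal{D}(\mathbb{C}_+))$ with constant $c_\alpha C(\mu)$.

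For properness the key observation is that the constant function $1$ lies in $\mathcal{D}^\alpha(\mathbb{C}_+)$ with finite (indeed unit) norm, while $1 \notin \mathcal{D}(\mathbb{C}_+)$; testing the Carleson criterion for $\mathcal{D}^\alpha(\mathbb{C}_+)$ on $F \equiv 1$ forces $\mu(\mathbb{C}_+) \leq C(\mu) < \infty$, i.e. \emph{only finite measures can belong to} $CM(\mathcal{D}^\alpha(\mathbb{C}_+))$. So it is enough to produce an infinite measure in $CM(\mathcal{D}(\mathbb{C}_+))$. I would take $\mu$ to be the arc-length (one-dimensional Lebesgue) measure carried by the vertical line $\ell := \{1 + iy : y \in \mathbb{R}\} \subset \mathbb{C}_+$. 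Then $\mu(\mathbb{C}_+) = +\infty$, whereas for $a \in \mathbb{C}_+$ the set $Q(a) \cap \ell$ is empty unless $2\re(a) \geq 1$, in which case it is the segment $\{1+iy : |y - \im(a)| \leq \re(a)\}$ of length $2\re(a)$; thus $\mu(Q(a)) \leq 2\re(a) = O(\re(a))$ uniformly in $a$, and part (1) of the first Proposition of this section gives $\mu \in CM(\mathcal{D}(\mathbb{C}_+))$. Since $\mu$ is not finite, $\mu \notin CM(\mathcal{D}^\alpha(\mathbb{C}_+))$, which makes the inclusion proper.

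I do not anticipate a real obstacle: both halves are short. The only step meriting a moment's care is the verification that the chosen $\mu$ satisfies the uniform Carleson-square bound $\mu(Q(a)) = O(\re(a))$ — purely the elementary geometry of how $\ell$ meets the squares $Q(a)$, including the behaviour for small $\re(a)$. Any measure of infinite total mass whose support meets each $Q(a)$ in a set of one-dimensional measure $O(\re(a))$ would serve equally well (for instance $\mu = \sum_{n \geq 1} \delta_n$).
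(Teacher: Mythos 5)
Your proof is correct and follows essentially the same route as the paper: the inclusion via the norm domination recorded in \eqref{eq:dirichlets}, and properness by testing the Carleson criterion on the constant function $1 \in \mathcal{D}^\alpha(\mathbb{C}_+)$ to force finiteness of any $\mu \in CM(\mathcal{D}^\alpha(\mathbb{C}_+))$, then exhibiting an infinite measure satisfying $\mu(Q(a)) = O(\re(a))$. The only difference is the choice of witness (the paper uses $\delta_0 \otimes \lambda$, Lebesgue measure on the imaginary axis, whereas you use arc length on the line $\re(z)=1$), and your choice is if anything cleaner since it is supported inside the open half-plane.
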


\begin{proof}
The inclusion $CM(\mathcal{D}^\alpha(\mathbb{C}_+)) \subseteq CM(\mathcal{D}(\mathbb{C}_+))$ is obvious by \eqref{eq:dirichlets}. It is proper, since whenever $\mu \in CM(\mathcal{D}^\alpha(\mathbb{C}_+))$, then
\begin{displaymath}
	\mu(\Omega) \leq \int_{\mathbb{C}_+} |1|^2 \, d\mu \leq C(\mu) \|1\|^2_{\mathcal{D}^\alpha(\mathbb{C}_+)} = C(\mu),
\end{displaymath}
for all $\Omega \subset \mathbb{C}_+$ and some $C(\mu)>0$, not depending on $\Omega$. That is, $\mu$ must be bounded, whereas $\delta_0 \otimes \lambda$ is clearly an unbounded measure, which belongs to $CM(\mathcal{D}(\mathbb{C}_+))$.
\end{proof}

\begin{theorem} Let $\mu$ be a positive Borel measure on $\mathbb{C}_+$.
\begin{enumerate}
	\item The measure $\mu$ is a Carleson measure for $\mathcal{D}^\alpha(\mathbb{C}_+)$ if and only if there exists a constant $C(\alpha,\, \mu)>0$ such that
	\small
	\begin{displaymath}
		\int_{\mathbb{C}_+} \left|\int_{\mathbb{C}_+} G(\zeta)\frac{\overline{\zeta-\alpha}}{(z+\overline{\alpha})(z+\overline{\zeta})} \, d\mu(\zeta) \right|^2 \, dz \leq C(\alpha, \, \mu) \int_{\mathbb{C}_+} |G|^2 \, d\mu - (\ln\pi-1)^2\left|\int_{\mathbb{C}_+} G \, d\mu \right|^2,
	\end{displaymath}
	\normalsize
	for all $G \in L^2(\mathbb{C}_+, \, \mu)$.
	\item The measure $\mu$ is a Carleson measure for $\mathcal{D}(\mathbb{C}_+)$ if and only if there exists a constant $D(\mu)>0$ such that
	\begin{displaymath}
		\int_{\mathbb{C}_+} \left|\int_{\mathbb{C}_+} \frac{G(\zeta)}{z+\overline{\zeta}} \, d\mu(\zeta) \right|^2 \, \frac{dz}{\pi e^{2\re(z)}} \leq D(\mu) \int_{\mathbb{C}_+} |G|^2 \, d\mu,
	\end{displaymath}
	for all $G \in L^2(\mathbb{C}_+, \, \mu)$.
\end{enumerate}
\end{theorem}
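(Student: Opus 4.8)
The plan is to run the reproducing-kernel duality argument — the same one behind the lemma that yields \eqref{eq:ransford}, whose proof works verbatim for any reproducing kernel Hilbert space of analytic functions on $\mathbb{C}_+$ — but this time carrying along the \emph{explicit} norm of each space. For such a space $\mathcal{H}$ with kernel $k_z^{\mathcal{H}}$, the measure $\mu$ is Carleson for $\mathcal{H}$ with constant $C$ if and only if the inclusion $J:\mathcal{H}\hookrightarrow L^2(\mathbb{C}_+,\mu)$ is bounded with $\|J\|^2=C$, equivalently its adjoint obeys $\|J^\ast G\|_{\mathcal{H}}^2\le C\|G\|_{L^2(\mathbb{C}_+,\mu)}^2$ for all $G$; and pairing $J^\ast G$ with $k_z^{\mathcal{H}}$ and using the reproducing property gives
\begin{displaymath}
(J^\ast G)(z)=\int_{\mathbb{C}_+}G(\zeta)\,\overline{k_z^{\mathcal{H}}(\zeta)}\,d\mu(\zeta).
\end{displaymath}
So the whole proof amounts to substituting the two reproducing kernels computed earlier in this section and then expanding $\|J^\ast G\|_{\mathcal{H}}^2$ through the concrete definitions of the two norms.

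For part (1), $\mathcal{H}=\mathcal{D}^\alpha(\mathbb{C}_+)$, I would put $F:=J^\ast G$ and use $\|F\|^2_{\mathcal{D}^\alpha(\mathbb{C}_+)}=\|F'\|^2_{\mathcal{B}^2_0(\mathbb{C}_+)}+|F(\alpha)|^2$. Differentiating $\overline{k_z^{\mathcal{D}^\alpha(\mathbb{C}_+)}(\zeta)}$ in $z$ kills the $\zeta$-only factor and leaves $\frac{1}{z+\overline{\alpha}}-\frac{1}{z+\overline{\zeta}}=\frac{\overline{\zeta-\alpha}}{(z+\overline{\alpha})(z+\overline{\zeta})}$, which is precisely the kernel inside the first integral of the statement; and setting $z=\alpha$ in the explicit formula for $k_z^{\mathcal{D}^\alpha(\mathbb{C}_+)}$ collapses it to the constant $1-\ln\pi$, so $F(\alpha)=(1-\ln\pi)\int_{\mathbb{C}_+}G\,d\mu$ and $|F(\alpha)|^2=(\ln\pi-1)^2\big|\int_{\mathbb{C}_+}G\,d\mu\big|^2$. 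Feeding these into $\|F\|^2_{\mathcal{D}^\alpha(\mathbb{C}_+)}\le C\|G\|^2_{L^2(\mathbb{C}_+,\mu)}$ and transposing the point-evaluation term to the right-hand side is exactly the asserted inequality, $C(\alpha,\mu)$ being the resulting constant.

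For part (2), $\mathcal{H}=\mathcal{D}(\mathbb{C}_+)=A^2_{(1)}$ with $w_{(1)}(t)=1+t$, the kernel \eqref{eq:kernel} is $k_z^{\mathcal{D}(\mathbb{C}_+)}=\mathfrak{L}\big[e^{-t\overline{z}}/(1+t)\big]$, so the formula for $J^\ast G$ together with Fubini gives $J^\ast G=\mathfrak{L}\big[\Phi(t)/(1+t)\big]$ with $\Phi(t):=\int_{\mathbb{C}_+}G(\zeta)e^{-t\overline{\zeta}}\,d\mu(\zeta)$, whence by Theorem \ref{thm:mainthm} $\|J^\ast G\|^2_{\mathcal{D}(\mathbb{C}_+)}=\int_0^\infty|\Phi(t)|^2(1+t)^{-1}\,dt$. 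The point is that $(1+t)^{-1}$ is itself an admissible weight: with $d\tilde{\nu}(r)=\pi^{-1}e^{-2r}\,dr$, which satisfies \eqref{eq:delta2}, one has $2\pi\int_0^\infty e^{-2rt}\,d\tilde{\nu}(r)=(1+t)^{-1}$, so a \emph{second} application of Theorem \ref{thm:mainthm}, now to $\Phi$ and the single measure $\nu=\tilde{\nu}\otimes\lambda$, rewrites $\int_0^\infty|\Phi(t)|^2(1+t)^{-1}\,dt$ as $\|\mathfrak{L}[\Phi]\|^2_{A^2_\nu}=\int_{\mathbb{C}_+}|\mathfrak{L}[\Phi](z)|^2\,dz/(\pi e^{2\re(z)})$. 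Finally, interchanging the $t$- and $\mu$-integrals identifies $\mathfrak{L}[\Phi](z)=\int_{\mathbb{C}_+}G(\zeta)(z+\overline{\zeta})^{-1}\,d\mu(\zeta)$, and substituting this into $\|J^\ast G\|^2_{\mathcal{D}(\mathbb{C}_+)}\le D\|G\|^2_{L^2(\mathbb{C}_+,\mu)}$ is the stated criterion.

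The steps that need care rather than ideas are two. First, the repeated use of Fubini when $\mu$ may be infinite: I would handle it as in the lemma yielding \eqref{eq:ransford}, first restricting $\mu$ to the exhausting sets $\Omega_r$ — on which every kernel is bounded on the support and all integrals converge absolutely — proving the equivalence there, and then letting $r\to\infty$, both sides of each criterion being monotone in $r$. Second, the replacement of $\|\mathfrak{L}[\Phi]\|^2_{A^2_\nu}$ by the plain integral over $\mathbb{C}_+$ rather than the defining supremum over $\varepsilon>0$: this uses that $\tilde{\nu}$ above is non-atomic, via the identity $\int_{\mathbb{C}_+}|\mathfrak{L}[\phi](z+\varepsilon)|^2\,d\nu=\int_0^\infty|\phi(t)|^2e^{-2\varepsilon t}(1+t)^{-1}\,dt$, which increases to $\int_0^\infty|\phi(t)|^2(1+t)^{-1}\,dt$ as $\varepsilon\to0^+$ and also equals $e^{2\varepsilon}\int_{\re(z)>\varepsilon}|\mathfrak{L}[\phi](z)|^2\,d\nu$, which in turn increases to $\int_{\mathbb{C}_+}|\mathfrak{L}[\phi](z)|^2\,d\nu$. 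Everything else is the bookkeeping of the two norm formulas.
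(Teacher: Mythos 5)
Your proposal is correct and follows essentially the same route as the paper: both parts reduce the Carleson property to boundedness of the adjoint of the inclusion $\iota^*G(z)=\int_{\mathbb{C}_+}G(\zeta)\overline{k_z(\zeta)}\,d\mu(\zeta)$, then expand the $\mathcal{D}^\alpha(\mathbb{C}_+)$-norm via $\|(\iota^*G)'\|^2_{\mathcal{B}^2_0}+|(\iota^*G)(\alpha)|^2$ (with $k_\alpha\equiv 1-\ln\pi$ giving the $(\ln\pi-1)^2$ term) and the $\mathcal{D}(\mathbb{C}_+)$-norm via $\mathfrak{L}^{-1}$ and a second application of Theorem \ref{thm:mainthm} with the weight $1/(1+t)=2\pi\int_0^\infty e^{-2rt}\,dr/(\pi e^{2r})$. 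Your added care with Fubini on the exhausting sets $\Omega_r$ and with replacing the defining supremum in the $A^2_\nu$-norm by the plain integral are refinements the paper leaves implicit, not a different argument.
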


\begin{proof} ~
\begin{enumerate}
	\item A positive Borel measure  $\mu$ on $\mathbb{C}_+$ is a Carleson measure for $\mathcal{D}^\alpha(\mathbb{C}_+)$ if and only if the adjoint of the inclusion map $\iota^*: L^2(\mathbb{C}_+, \, \mu)~\hookrightarrow~\mathcal{D}^\alpha(\mathbb{C}_+)$ is bounded, that is there exists $C(\alpha, \, \mu)>0$ such that
	\begin{equation}
		\|\iota^* G\|^2_{\mathcal{D}^\alpha(\mathbb{C}_+)} \leq C(\alpha, \, \mu) \| G\|^2_{L^2(\mathbb{C}_+, \, \mu)},
	\label{eq:iotadirichletalpha}
	\end{equation}
	for all $G \in L^2(\mathbb{C}_+, \, \mu)$. Also
	\begin{align}
	\begin{split}
	\iota^* G(z) &\defeq \left\langle \iota^* G, \, k^{\mathcal{D}^\alpha(\mathbb{C}_+)}_z \right\rangle_{\mathcal{D}^\alpha(\mathbb{C}_+)} \defeq \left\langle G, \, k^{\mathcal{D}^\alpha(\mathbb{C}_+)}_z \right\rangle_{L^2(\mathbb{C}_+, \, \mu)} \\
	&\defeq \int_{\mathbb{C}_+} G(\zeta)\left(\ln \left(\frac{(\alpha+\overline{\zeta})(\overline{\alpha} +z)}{2\pi\re(\alpha)(z+\overline{\zeta})}\right)+1\right) \, d\mu(\zeta),
	\end{split}
	\label{eq:iotadirichletalphag}
	\end{align}
	for all $z \in \mathbb{C}_+$. And so
	\small
	\begin{displaymath}
	C(\alpha, \, \mu)\|G\|^2_{L^2(\mathbb{C}_+, \, \mu)} \stackrel{\eqref{eq:iotadirichletalpha}, \, \eqref{eq:iotadirichletalphag}}{\geq} \int_{\mathbb{C}_+} \left|\int_{\mathbb{C}_+} G(\zeta)\frac{\overline{\zeta-\alpha}}{(z+\overline{\alpha})(z+\overline{\zeta})} \, d\mu(\zeta) \right|^2 \, dz+(\ln\pi-1)^2\left|\int_{\mathbb{C}_+} G \, d\mu \right|^2,
	\end{displaymath}
	\normalsize
	as required.
	\item By the equation \eqref{eq:kernel} we know that
	\begin{displaymath}
	k^{\mathcal{D}(\mathbb{C}_+)}_z(\zeta) = \int_0^\infty \frac{e^{-t(\overline{z}+\zeta)}}{1+t} \, dt \qquad \qquad (\forall (z, \zeta) \in \mathbb{C}_+).
	\end{displaymath}
	And then, similarly as in 1. we have
	\small
	\begin{align*}
	D(\mu) \|G\|^2_{L^2(\mathbb{C}_+, \, \mu)} &\geq \left\|\left\langle G, \, k^{\mathcal{D}(\mathbb{C}_+)}_\cdot \right\rangle_{L^2(\mathbb{C}_+, \, \mu)} \right\|^2_{\mathcal{D}(\mathbb{C}_+)} \\
	&-\int_0^\infty \left|\mathfrak{L}^{-1}\left[ \int_{\mathbb{C}_+} G(\zeta) \int_0^\infty \frac{e^{-\tau(z+\overline{\zeta})}}{1+\tau} \, d\tau \, d\mu(\zeta) \right](t)\right|^2 \, (1+t) \, dt \\ 
	&=\int_0^\infty \left|\int_{\mathbb{C}_+} G(\zeta) e^{-t\overline{\zeta}} \, d\mu(\zeta) \right|^2 \frac{dt}{1+t}.
	\end{align*}
	\normalsize
	Now,
	\begin{displaymath}
		\frac{1}{1+t} = 2 \int_0^\infty e^{-2r(t+1)} \, dr = 2\pi \int_0^\infty e^{-2rt} \, \frac{dr}{\pi e^{2r}},
	\end{displaymath}
	hence by Theorem \ref{thm:mainthm}
	\begin{align*}
		D \|G\|^2_{L^2(\mathbb{C}_+, \, \mu)} &\geq \int_{\mathbb{C}_+} \left|\int_0^\infty \int_{\mathbb{C}_+} G(\zeta) e^{-t\overline{\zeta}} \, d\mu(\zeta) e^{tz} \right|^2 \frac{dz}{\pi e^{2\re(z)}} \\
		&= \int_{\mathbb{C}_+} \left| \int_{\mathbb{C}_+} \frac{G(\zeta)}{z+\overline{\zeta}} \, d\mu(\zeta) \right|^2 \, \frac{dz}{\pi e^{2\re(z)}}.
	\end{align*}
\end{enumerate}
\end{proof}

\begin{corollary}
If $\mu$ is a Carleson measure for $\mathcal{D}(\mathbb{C}_+)$, then there exists a constant $C(\mu)>0$ such that for all $a \in \mathbb{C}_+$ we have
\begin{displaymath}
	\int_{\mathbb{C}_+} \left(\frac{\mu(Q(a) \cap Q(z))}{e^{\re(z)}\re(z)}\right)^2 \, dz \leq C(\mu) \mu(Q(a)).
\end{displaymath} 
\end{corollary}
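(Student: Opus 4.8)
The plan is to feed the characterisation in part~2 of the preceding theorem with the single test function $G = \chi_{Q(a)}$, the indicator function of the Carleson square centred at $a$. Since $\mu$ is Carleson for $\mathcal{D}(\mathbb{C}_+)$, Theorem~\ref{carlesonforA2} gives $\mu(Q(a)) < \infty$, so $\chi_{Q(a)} \in L^2(\mathbb{C}_+, \mu)$ and the right-hand side of that characterisation is simply $D(\mu)\,\mu(Q(a))$. It then suffices to bound the left-hand side from below by a fixed constant times $\int_{\mathbb{C}_+}\bigl(\mu(Q(a)\cap Q(z))/(e^{\re(z)}\re(z))\bigr)^2\,dz$.

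For the lower bound the key point is that passing to real parts produces no cancellation inside the inner integral: for $z, \zeta \in \mathbb{C}_+$ one has $\re(z + \overline{\zeta}) = \re(z) + \re(\zeta) > 0$, hence $\re\bigl(1/(z+\overline{\zeta})\bigr) = (\re(z)+\re(\zeta))/|z+\overline{\zeta}|^2 > 0$, and therefore
\[
	\left| \int_{Q(a)} \frac{d\mu(\zeta)}{z + \overline{\zeta}} \right| \; \geq \; \re \int_{Q(a)} \frac{d\mu(\zeta)}{z+\overline{\zeta}} \; = \; \int_{Q(a)} \frac{\re(z)+\re(\zeta)}{|z+\overline{\zeta}|^2}\, d\mu(\zeta) \; \geq \; \int_{Q(a)\cap Q(z)} \frac{\re(z)+\re(\zeta)}{|z+\overline{\zeta}|^2}\, d\mu(\zeta).
\]
If $\zeta \in Q(z)$, the defining inequalities \eqref{eq:carlesonsquare} give $\re(\zeta) \leq 2\re(z)$ and $|\im(z) - \im(\zeta)| \leq \re(z)$, so that $|z + \overline{\zeta}|^2 = (\re(z)+\re(\zeta))^2 + (\im(z)-\im(\zeta))^2 \leq 10\,\re(z)^2$, while trivially $\re(z) + \re(\zeta) \geq \re(z)$; hence the integrand above is at least $1/(10\,\re(z))$ on $Q(a)\cap Q(z)$, and so
\[
	\left| \int_{Q(a)} \frac{d\mu(\zeta)}{z + \overline{\zeta}} \right| \; \geq \; \frac{\mu(Q(a)\cap Q(z))}{10\,\re(z)}.
\]

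Squaring this, dividing by $\pi e^{2\re(z)}$ and integrating over $z \in \mathbb{C}_+$ will show that the left-hand side of the characterisation in part~2 of the preceding theorem is at least $(100\pi)^{-1}\int_{\mathbb{C}_+}\bigl(\mu(Q(a)\cap Q(z))/(e^{\re(z)}\re(z))\bigr)^2\,dz$; comparing with the upper bound $D(\mu)\,\mu(Q(a))$ then yields the assertion with $C(\mu) = 100\pi\, D(\mu)$. No step here is genuinely delicate: the only things to watch are the admissibility of the test function (finiteness of $\mu(Q(a))$, which we get from the earlier necessary condition) and the decision to estimate the complex-valued inner integral through its strictly positive real part rather than head-on.
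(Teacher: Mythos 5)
Your proposal is correct and follows essentially the same route as the paper: test the characterisation in part 2 of the preceding theorem with $G=\chi_{Q(a)}$, bound the inner integral from below by its (nonnegative) real part restricted to $Q(a)\cap Q(z)$, and use the geometry of the Carleson square to obtain the lower bound $\mu(Q(a)\cap Q(z))/(10\re(z))$. The only additions are cosmetic: you justify the admissibility of the test function via the earlier necessary condition and track the explicit constant, neither of which the paper bothers to spell out.
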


\begin{proof}
By the previous theorem, applied with $G= \chi_{Q(a)}$, we get
\begin{displaymath}
	\mu(Q(a)) \gtrapprox \int_{\mathbb{C}_+} \left| \int_{Q(a)}\frac{d\mu(\zeta)}{z+\overline{\zeta}} \right|^2 \, \frac{dz}{e^{2\re(z)}}.
\end{displaymath}
Now
\begin{equation}
	\re\left(\frac{1}{z+\overline{\zeta}}\right)= \frac{\re(z)+\re(\zeta)}{|z+\overline{\zeta}|^2} \geq 0,
	\label{eq:positiverealkernel}
\end{equation}
so for any $z \in \mathbb{C}_+$,
\begin{align*}
	\left| \int_{Q(a)}\frac{d\mu(\zeta)}{z+\overline{\zeta}} \right| &\geq \re \left(\int_{Q(a)}\frac{d\mu(\zeta)}{z+\overline{\zeta}}\right) \\
	&\stackrel{\eqref{eq:positiverealkernel}}{\geq} \int_{Q(a)\cap Q(z)} \frac{\re(z)+\re(\zeta)}{(\re(z)+\re(\zeta))^2+|\im(z)-\im(\zeta)|^2} \, d\mu(\zeta)\\
	&\stackrel{\eqref{eq:carlesonsquare}}{\geq} \int_{Q(a)\cap Q(z)} \frac{\re(z)}{10(\re(z))^2} \, d\mu(\zeta) = \frac{\mu(Q(a) \cap Q(z))}{10\re(z)},
\end{align*}
and the result follows.
\end{proof}

\section{Carleson embeddings for trees}
\label{sec:trees}

To investigate sufficient conditions for a measure to be Carleson for the Dirichlet space and similar spaces, let us now turn our attention to trees. This approach was introduced in \cite{arcozzi2002} to classify Carleson measures for analytic Besov spaces on the unit disk of the complex plane and in \cite{arcozzi2008} for Drury-Averson Hardy space and Besov-Sobolev spaces on complex $n$-balls.

Consider a tree $T$ with a partial order relation "$\leq$" defined on the set of its vertices. We will write $v \in T$ to denote that $v$ is a vertex of $T$, and in general associate $T$ with the set of its vertices only. Let $x, \, y$ be two distinct elements (vertices) of $T$. If for all $c \in T$ such that $x \leq c \leq y$ we have $x=c$ or $y=c$, then we call $y$ the \emph{predecessor of $x$} and write $y:= x^-$. 
For any $\varphi : T \longrightarrow \mathbb{C}$ we define the \emph{primitive $\mathcal{I}$ of $\varphi$ at $x \in T$} to be
\begin{displaymath}
	\mathcal{I}\varphi(x) := \sum_{y \leq x} \varphi(y).
\end{displaymath}
And finally, we let
\begin{displaymath}
	S(x) := \left\{y \in T \; : \; y \geq x\right\} \quad \text{ and } \quad S(-\infty) := T.
\end{displaymath}
The following two lemmata appear in \cite{arcozzi2002} in a similar form. The first of them is, however,  only stated for rooted trees. This would cause a problem in Lemma \ref{secondarcozzilemma}, because if we decide to decompose a half-plane into subsets, each of them corresponding to a vertex of some tree, and we let one of this sets correspond to the root of the tree, then we would only restrict our consideration of Carleson measures to those which are bounded on $\mathbb{C}_+$. In order to avoid this, we shall rephrase the statement of Lemma \ref{firstarcozzilemma} (part of Theorem 3 in \cite{arcozzi2002}, p. 447) in order to incorporate rootless trees as well, and amend the proof where necessary.

\begin{lemma}
\label{firstarcozzilemma}
Let $T$ be a tree with a partial order defined on it, let $1<p \leq q < \infty$, and let $p'=p/(p-1)$, $q':=q/(q-1)$ be the adjoint indices of $p$ and $q$. Let also $\rho$ be a weight on $T$, and $\mu$ be a non-negative function on $T$. If there exists a constant $C(\mu, \, \rho)>0$ such that for all $r \in T \cup \{-\infty\}$,
\begin{equation}
\left(\sum_{x \in S(r)} \left(\sum_{y \in S(x)} \mu(y) \right)^{p'} \rho(x)^{1-p'} \right)^{q'/p'} \leq C(\mu, \rho) \sum_{x \in S(r)} \mu(x),
\label{eq:0}
\end{equation}
then there exists a constant $C'(\mu, \, \rho)>0$ such that for all $\varphi : T \longrightarrow \mathbb{C}$,
\begin{displaymath}
\left(\sum_{x \in T} |\mathcal{I}\varphi(x)|^q \mu(x) \right)^{1/q} \leq C'(\mu, \, \rho) \left(\sum_{x \in T} |\varphi(x)|^p \rho(x) \right)^{1/p}.
\end{displaymath}
\end{lemma}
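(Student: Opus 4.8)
The plan is to reduce the rootless case to the rooted case of Theorem 3 in \cite{arcozzi2002} by exhausting $T$ with an increasing family of rooted subtrees and passing to the limit. First I would observe that the hypothesis \eqref{eq:0}, when restricted to any subtree $T'$ of $T$ that is closed upward under the partial order (i.e. $x \in T'$ and $y \geq x$ imply $y \in T'$), continues to hold with the same constant $C(\mu, \rho)$, because for $r \in T'$ the sets $S(r)$ and $S(x)$ computed inside $T'$ are exactly the intersections with $T'$ of the corresponding sets in $T$, and all the summands are non-negative so discarding vertices only decreases both sides. In particular, for any fixed vertex $v \in T$ the downward-closed complement trick does not apply, so instead I take $T_v := S(v) = \{y : y \geq v\}$, which is a \emph{rooted} tree with root $v$, partially ordered by the restriction of $\leq$, and on which \eqref{eq:0} holds (taking $r$ ranging over $T_v \cup \{-\infty\}$, where now $S(-\infty) = T_v$, which corresponds to the choice $r = v$ in the original tree).

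Next I would apply the rooted version of the lemma (the original Theorem 3 of \cite{arcozzi2002}) to each $T_v$, obtaining for every finitely supported $\varphi : T \to \mathbb{C}$ and every $v$ the inequality
\begin{displaymath}
	\left(\sum_{x \in S(v)} \left| \sum_{v \leq y \leq x} \varphi(y) \right|^q \mu(x) \right)^{1/q} \leq C'(\mu, \rho) \left( \sum_{x \in S(v)} |\varphi(x)|^p \rho(x) \right)^{1/p},
\end{displaymath}
with $C'$ depending only on $C(\mu,\rho)$, $p$, $q$ — crucially \emph{not} on $v$, since the constant produced by the rooted theorem depends only on the structural constant in \eqref{eq:0}. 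The difference between $\sum_{v \leq y \leq x}\varphi(y)$ and $\mathcal{I}\varphi(x) = \sum_{y \leq x}\varphi(y)$ is the tail $\sum_{y < v}\varphi(y) \cdot \mathbf{1}$ — wait, more precisely it is $\sum_{y \leq x,\, y \not\geq v}\varphi(y)$; since $T$ is a tree and $x \geq v$, the set $\{y : y \leq x\}$ is a chain, so $\{y \leq x : y \not\geq v\} = \{y : y < v\}$ is independent of $x$. Hence on $S(v)$ we have $\mathcal{I}\varphi(x) = \mathcal{I}_v\varphi(x) + c_v$ where $c_v := \sum_{y < v}\varphi(y)$ is a constant and $\mathcal{I}_v\varphi(x) = \sum_{v \leq y \leq x}\varphi(y)$.

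The main obstacle, then, is handling this additive constant $c_v$: I cannot simply bound $\|\mathcal{I}\varphi\|_{L^q(\mu|_{S(v)})}$ by $\|\mathcal{I}_v\varphi\|_{L^q(\mu|_{S(v)})}$ plus $|c_v|\,\mu(S(v))^{1/q}$ and let $v$ run to $-\infty$, because $\mu(S(v))$ need not vanish. The resolution is to choose the exhausting sequence carefully: fix any vertex $v_0$ in the (finite) support of $\varphi$, or below it, so that for $v$ chosen low enough — on the chain below the support of $\varphi$, using that $\varphi$ is finitely supported and $T$ is a tree so the support lies in finitely many branches and we can descend below their common part — we get $c_v = \sum_{y<v}\varphi(y)$ stabilising, and in fact by descending far enough along the branch we can arrange either that $c_v = \mathcal{I}\varphi(v^-)$ picks up the entire "past" so that $\mathcal{I}_v\varphi$ captures everything relevant, or we absorb $c_v$ by noting $\mathcal{I}\varphi(x)$ for $x \in S(v)$ equals $\mathcal{I}_{v}(\varphi + c_v\delta_v)(x)$ and applying the rooted estimate to the modified function $\varphi + c_v \delta_v$, whose $\ell^p(\rho)$ norm is controlled by $\|\varphi\|_{\ell^p(\rho)}$ up to the single extra term $|c_v|^p\rho(v) \leq \|\mathcal{I}\varphi\|_\infty^p \rho(v)$. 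I would then take $v \to$ the bottom so that $S(v) \uparrow T$ (choosing $v$ along a branch through which all of $T$ is eventually reached — if $T$ is connected and the order has no minimum this exhausts $T$; if $T$ has several "ends below" one treats each component of $T \setminus S(v)$ separately, but since $T$ is a tree with a single partial order the relevant downward directions form a chain), apply monotone convergence to the left-hand side, and pass to the supremum over finitely supported $\varphi$ by density to conclude. I expect the bookkeeping around which vertices $v$ make $S(v)$ exhaust $T$ and the uniformity of $C'$ in $v$ to be the only genuinely delicate points; everything else is the verbatim rooted argument of \cite{arcozzi2002}.
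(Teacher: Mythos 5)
Your strategy is sound but genuinely different from the one used in the paper. The paper does not reduce to the rooted case at all: it reruns the Arcozzi--Rochberg--Sawyer stopping-time (good-$\lambda$) argument directly on the rootless tree, decomposing the level sets $\Omega_k=\{x: \mathcal{I}g(x)>2^k\}$ into sets $S(r_j^k)$ over minimal points $r_j^k$ when such points exist, and invoking the extra case $r=-\infty$ of the hypothesis \eqref{eq:0} (with $S(-\infty)=T$) precisely when a level set has no minimal points; the splitting of the sum over the index sets $E$ and $F$ and the H\"older/absorption steps are then carried out verbatim. Your exhaustion argument instead black-boxes the rooted theorem on each $S(v)$, and this works: the hypothesis restricts to $S(v)$ with the same constant (the ``whole-tree'' case for $S(v)$ being the case $r=v$ of \eqref{eq:0}), the rooted constant is uniform in $v$, and for finitely supported $\varphi$ with $v$ chosen below the support the additive constant is exactly $c_v=\sum_{y<v}\varphi(y)=0$, so $\mathcal{I}\varphi=\mathcal{I}_v\varphi$ on $S(v)$ and monotone convergence finishes. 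What your route buys is economy (no need to re-examine the stopping-time machinery); what the paper's route buys is that it needs no structural assumptions beyond those already implicit in the stopping-time argument, whereas yours leans on the tree having, in each connected component, meets of finite sets and a cofinal descending chain.

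Two points in your writeup need tightening. First, the ``absorb $c_v$ into $\varphi+c_v\delta_v$'' variant does not work as stated: the extra term $|c_v|^p\rho(v)$ is not controlled by $\|\varphi\|_{\ell^p(\rho)}^p$ in general (nothing forces $\rho(v)$ to be small), so you must commit to the other branch of your ``either/or,'' namely that $c_v$ vanishes once $v$ lies below the finite support --- state that explicitly rather than leaving both options open. Second, your parenthetical claim that ``the relevant downward directions form a chain'' is false for the very tree this lemma is applied to: the tree $T(\zeta)$ built from the rectangles $R_{(k,l)}(\zeta)$ has two components (the $l\ge 0$ and $l<0$ halves), each with its own downward end. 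This is harmless --- run the argument on each component and recombine using $\ell^p\hookrightarrow\ell^q$ for $p\le q$ --- but it is a step you must actually include, not dismiss. Finally, the passage from finitely supported to general $\varphi$ should go through $|\mathcal{I}\varphi|\le\mathcal{I}|\varphi|$ and monotone convergence for non-negative $\varphi$ (as the paper's reduction to $g\ge 0$ does), since for general $\varphi\in\ell^p(\rho)$ the series defining $\mathcal{I}\varphi(x)$ need not converge; ``density'' alone is not the right word.
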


\begin{proof}
Let $g \in L^p(T, \, \rho)$. To prove this lemma we only need to show that
\begin{displaymath}
\|\mathcal{I}g\|_{L^q(T, \,  \mu)} \leq C'(\mu, \, \rho)\|g\|_{L^p(T, \, \rho)},
\end{displaymath}
for all $g \geq 0$, in which case $\mathcal{I}g$ is non-decreasing with respect to the order relation on $T$. Let
\begin{displaymath}
	\Omega_k := \left\{x \in T \; : \; \mathcal{I}g(x) > 2^k \right\} = \bigcup_j S(r_j^k),
\end{displaymath}
where $\{r_j^k \in T \; : \; j=1, \, \ldots \, \}$ is the set of minimal points in $\Omega_k$ with respect to the partial order on $T$, if such points exist. Otherwise we define $r_1^k := - \infty$ and
\begin{displaymath}
	\Omega_k := \left\{x \in T \; : \; \mathcal{I}g > 2^k \right\} \defeq S(r_1^k) \defeq S(-\infty) \defeq T.
\end{displaymath}
Let $E_j^k = S(r_j^k) \cap \left(\Omega_{k+1} \setminus \Omega_{k+2} \right)$. Then for $x \in E_j^k$ we get
\begin{displaymath}
\mathcal{I}(\chi_{S(r_j^k)}g)(x) = \sum_{r_j^k \leq y \leq x} g(y) = \mathcal{I}g(x) - \mathcal{I}g((r_j^k)^-) \geq 2^k,
\end{displaymath}
where we adopt a convention that $\mathcal{I}g((r_j^k)^-) :=0$, whenever $r_j^k = - \infty$. Let $\tilde{\mu}$ be a measure on the $\sigma$-algebra $\mathcal{P}(T)$ (the power set of $T$) defined by $\tilde{\mu}(\{x\}) := \mu(x)$, for all $x \in T$. Thus we have,
\begin{align*}
2^k\tilde{\mu}(E^k_j) &= 2^k \sum_{x \in E^k_j} \mu(x) \\
&\leq \sum_{x \in E^k_j} \mathcal{I}(\chi_{S(r_j^k)}g)(x) \mu(x) \\
&=\sum_{y \in S(r_j^k)}g(y) \sum_{x \in E_j^k, \, x \geq y} \mu(x) \\
&=\sum_{y \in S(r_j^k)}g(y) \mathcal{I}^* \chi_{E^j_k}(y) \\
&=\sum_{y \in S(r_j^k) \cap (\Omega_{k+2} \cup \Omega_{k+2}^c)}g(y) \mathcal{I}^* \chi_{E^k_j}(y) \\
&=\sum_{y \in S(r_j^k) \cap \Omega_{k+2}} g(y) \mathcal{I}^* \chi_{E^k_j}(y) + \sum_{y \in S(r_j^k) \cap \Omega_{k+2}^c} g(y) \mathcal{I}^* \chi_{E^k_j}(y),
\end{align*}
where $\Omega^c_{k+2}$ denotes the complement of $\Omega_{k+2}$ in $\mathbb{C}_+$. But since $\mathcal{I}^* \chi_{E^k_j}(y)=0$ for all $y \in \Omega_{k+2}$,
\begin{equation}
	2^k\tilde{\mu}(E^k_j) \leq \sum_{y \in S(r_j^k) \cap \Omega_{k+2}^c} g(y)\mathcal{I}^* \chi_{E^k_j}(y).
\label{eq:secondsumest}
\end{equation}
Now,
\begin{align*}
\sum_{x \in T} |\mathcal{I}g(x)|^q \mu(x) &\leq \sum_{k \in \mathbb{Z}} \tilde{\mu}\left\{x \in T \; : \; 2^{k+1} < \mathcal{I}g(x) \leq 2^{k+2}\right\} 2^{(k+2)q}\\
&= 2^{2q} \sum_{k \in \mathbb{Z}} 2^{kq}\tilde{\mu}\left(\Omega_{k+1} \setminus \Omega_{k+2}\right) \\
&\leq 2^{2q} \sum_{k \in \mathbb{Z}, \, j} \tilde{\mu}(E_j^k) 2^{kq} \\
&= 2^{2q} \left(\sum_{(k,j) \in E} \tilde{\mu}(E_j^k) 2^{kq} + \sum_{(k,j) \in F} \tilde{\mu}(E_j^k) 2^{kq}\right),
\end{align*}
where
\begin{align}
\label{eq:E}
E &:= \left\{(k,j) \; : \; \tilde{\mu}(E^k_j) \leq \beta \tilde{\mu}(S(r^j_k)) \right\}, \\
\label{eq:F}
F &:= \left\{(k,j) \; : \; \tilde{\mu}(E^k_j) > \beta \tilde{\mu}(S(r^j_k)) \right\},
\end{align}
for some $0<\beta < \frac{1-2^{-q}}{2}$. Let $\left\{x^n_k \right\}_{k,n} \subseteq T \cup \{\varnothing\}$ be a collection of distinct elements of this set, such that $\left\{x^k_n \right\}_{k,n} = \Omega_k \setminus \Omega_{k+1}$, for all $k \in \mathbb{Z} \setminus \{0\}$. Then
\begin{align*}
\sum_{(k,j), k \geq 1} \tilde{\mu}(S(r^k_j))2^{kq} &= \sum_{k=1}^\infty \tilde{\mu}(\Omega_k)2^{kq} = \sum_{k=1}^\infty \tilde{\mu}(\Omega_k \setminus \Omega_{k+1})\sum_{l=1}^k 2^{lq} \\
&= \sum_{k=0}^\infty \tilde{\mu}(\{x^k_n\}_n) \sum_{l=0}^{k-1} 2^{(k-l)q} \leq \sum_{k=0}^\infty \sum_n \mu(x^k_n) |\mathcal{I}g(x^k_n)|^q \sum_{l=0}^{k-1}2^{-lq} \\
&\leq \frac{1}{1-2^{-q}} \sum_{x \in T} |\mathcal{I}g(x)|^q \mu(x) \defeq \frac{1}{1-2^{-q}} \|\mathcal{I}g\|^q_{L^q(T, \, \mu)}.
\end{align*}
Similarly,
\begin{align*}
\sum_{(k,j), k < 1} \tilde{\mu}(S(r^j_k))2^{kq} &= \sum_{k=-\infty}^0 \tilde{\mu}(\Omega_k)2^{kq} \\
&= \left(\sum_{l=0}^\infty 2^{-lq} \right)\left(\tilde{\mu}(\Omega_0) + \sum_{k=1}^\infty \tilde{\mu}(\Omega_{-k} \setminus \Omega_{-k+1}) 2^{-kq} \right)\\
&\leq \frac{1}{1-2^{-q}} \left(\sum_j \mu(r^0_j) |\mathcal{I}g(r^0_j)|^q + \sum_{k=1}^\infty \mu(x^{-k}_n)|\mathcal{I}g(x^{-k}_n)|^q \right) \\
&\leq \frac{1}{1-2^{-q}} \sum_{x \in T} |\mathcal{I}g(x)|^q \mu(x) \defeq \frac{1}{1-2^{-q}} \|\mathcal{I}g\|^q_{L^q(T, \, \mu)}.
\end{align*}
So
\begin{displaymath}
	2^{2q} \sum_{(k,j) \in E} \tilde{\mu}(E^k_j) 2^{kq} \stackrel{\eqref{eq:E}}{\leq}  \frac{2^{2q+1}}{1-2^{-q}} \beta \|\mathcal{I}g\|^q_{L^q(T, \, \mu)}.
\end{displaymath}
For the sum indexed by $F$ we have
\begin{align*}
\sum_{(k,j) \in F} &\tilde{\mu}(E_j^k) 2^{kq} \stackrel{\eqref{eq:secondsumest}}{\leq} \sum_{(k,j) \in F} \tilde{\mu}(E^k_j) \left|\tilde{\mu}(E^k_j)^{-1} \sum_{y \in S(r_j^k) \cap \Omega_{k+2}^c} g(y)\mathcal{I}^* \chi_{E^k_j}(y) \right|^q \\
&\stackrel{\eqref{eq:F}}{\leq} \beta^{1-q} \sum_{(k,j) \in F} \tilde{\mu}(S(r^k_j))^{1-q} \left| \sum_{y \in S(r_j^k) \cap \Omega_{k+2}^c} g(y)\mathcal{I}^* \chi_{E^k_j}(y) \right|^q \\
&\stackrel{\text{H\"{o}lder's}}{\leq} \beta^{1-q} \sum_{(k,j) \in F} \tilde{\mu}(S(r^k_j))^{1-q} \\
&\times \left(\sum_{y \in S(r_j^k) \cap \Omega_{k+2}^c} \left|\mathcal{I}^* \chi_{E^k_j}(y)\right|^{p'} \rho(y)^{1-p'} \right)^{\frac{q}{p'}} \left(\sum_{y \in S(r_j^k) \cap \Omega_{k+2}^c} \left|g(y)\right|^{p} \rho(y) \right)^{\frac{q}{p}}\\
\end{align*}
\begin{align*}
&\leq \beta^{1-q} \sum_{(k,j) \in F} \left(\sum_{x \in S(r^k_j)} \mu(x) \right)^{1-q} \\
&\times \left(\sum_{x \in S(r^k_j)} \left(\sum_{y \in S(x)} \mu(y) \rho(y)^{1-p'} \right)^{p'} \right)^{\frac{q'(q-1)}{p'}} \left(\sum_{y \in S(r_j^k) \cap \Omega_{k+2}^c} \left|g(y)\right|^{p} \rho(y) \right)^\frac{q}{p} \\
&\stackrel{\eqref{eq:0}}{\leq} \beta^{-q} C^{q-1} \sum_{(k,j)}  \left(\sum_{y \in S(r_j^k) \cap \Omega_{k+2}^c} \left|g(y)\right|^{p} \rho(y) \right)^{q/p} \\
&\stackrel{q\geq p}{\leq} \beta^{-q} C^{q-1} \left(\sum_{(k,j)} \sum_{y \in S(r_j^k) \cap \Omega_{k+2}^c} \left|g(y)\right|^{p} \rho(y) \right)^{q/p} \\
&= C^{q-1} \beta^{-q} \left( \sum_{k \in \mathbb{Z}} \sum_{y \in \Omega_k \cap \Omega_{k+2}^c} \left|g(y)\right|^{p} \rho(y) \right)^{q/p} \\
&= C^{q-1} \beta^{-q} \left( \sum_{k \in \mathbb{Z}} \sum_{y \in \Omega_k \setminus \Omega_{k+2}} \left|g(y)\right|^{p } \rho(y) \right)^{q/p} \\
&= 2^{q/p} \beta^{-q} C^{q-1} \left(\sum_{x \in T} \left|g(x)\right|^{p}  \rho(y) \right)^{q/p} \\
&\defeq 2^{q/p} C^{q-1} \|g\|^q_{L^p(T, \, \rho)}.
\end{align*}
Therefore we can conclude that
\begin{displaymath}
 \|\mathcal{I}g\|^q_{L^q(T, \, \mu)} \leq \frac{2^{2q+1}}{1-2^{-q}} \beta \|\mathcal{I}g\|^q_{L^q(\mu)} + 2^{q/p} C^{q-1} \beta^{1-q}\|g\|^q_{L^p(T(\zeta))},
\end{displaymath}
and since
\begin{displaymath}
	\beta < \frac{1-2^{-q}}{2^{2q+1}},
\end{displaymath}
we get the desired result.
\end{proof}

Given $\zeta \in \mathbb{C}_+$, consider the following decomposition of the right complex half-plane: for any $(k, l) \in \mathbb{Z}^2$ let
\begin{displaymath}
	R_{(k,l)}(\zeta) := \left\{z \in \mathbb{C}_+ \; : \; 2^{k-1} < \frac{\re(z)}{\re(\zeta)} \leq 2^{k}, \, 2^k l \leq \frac{\im(z)-\im(\zeta)}{\re{\zeta}} < 2^k (l+1)\right\}.
\end{displaymath}

\setlength{\unitlength}{12mm}
\begin{picture}(20,10.5)(-0.5,-5.25)
\thicklines
\put(0,-5.25){\vector(0,1){10.5}}
\put(-1,5){$\im(z)$}

\put(1,0){\circle*{0.075}}
\put(0.75,-0.25){\footnotesize{$\zeta$}} 

\thinlines

\put(8,-5.2){\line(0,1){10.4}}
\put(4,0){\line(1,0){6}} 
\put(5.5,-4){\large{$R_{(3,-1)}(\zeta)$}}
\put(5.5,4){\large{ $R_{(3,0)}(\zeta)$}}

\put(4,-5.2){\line(0,1){10.4}}
\multiput(2,-4)(0,4){3}{\line(1,0){2}} 
\put(2.5,-2){$R_{(2,-1)}(\zeta)$}
\put(2.5,2){ $R_{(2,0)}(\zeta)$}

\put(2,-5.2){\line(0,1){10.4}}
\multiput(1,-4)(0,2){5}{\line(1,0){1}} 
\put(1,-5){\tiny{$R_{(1,-3)}(\zeta)$}}
\put(1,-3){\tiny{$R_{(1,-2)}(\zeta)$}}
\put(1,-1){\tiny{$R_{(1,-1)}(\zeta)$}}
\put(1,1){\tiny{ $R_{(1,0)}(\zeta)$}}
\put(1,3){\tiny{ $R_{(1,1)}(\zeta)$}}
\put(1,5){\tiny{ $R_{(1,2)}(\zeta)$}}

\put(1,-5.2){\line(0,1){10.4}}
\multiput(0.5,-5)(0,1){11}{\line(1,0){0.5}} 

\put(0.5,-5.2){\line(0,1){10.4}}
\multiput(0.25,-5)(0,0.5){21}{\line(1,0){0.25}} 

\put(0.25,-5.2){\line(0,1){10.4}}
\multiput(0.125,-5)(0,0.25){41}{\line(1,0){0.125}} 

\put(0.125,-5.2){\line(0,1){10.4}}
\multiput(0.0625,-5.125)(0,0.125){82}{\line(1,0){0.0625}} 

\put(0.0625,-5.2){\line(0,1){10.25}}
\multiput(0.03125,-5.1875)(0,0.0625){164}{\line(1,0){0.03125}} 

\put(0.03125,-5.2){\line(0,1){10.25}}
\multiput(0.015625,-5.1875)(0,0.03125){327}{\line(1,0){0.015625}} 

\end{picture}

We can view each element of the set of rectangles $\{R_{(k,l)}(\zeta) : (k, \, l) \in \mathbb{Z}^2 \}$ as a vertex of an abstract graph. If we also have that $x, \, y \in \{R_{(k,l)}(\zeta) : (k, \, l) \in \mathbb{Z}^2 \}$ and $\overline{x} \cap \overline{y}$ is a vertical segment in $\mathbb{C}_+$, then we say there is an edge between $x$ and $y$.  With this convention, these vertices and edges form an abstract tree, which we shall denote by $T(\zeta)$. Let $A(\cdot)$ a positive function on the set vertices of $T(\zeta)$ assigning to each of them the area of the corresponding rectangle from $\{R_{(k,l)}(\zeta):(k, \, l) \in \mathbb{Z}^2 \}$. We can define a partial order on $T(\zeta)$ by considering the unique path between each pair $x, \, y \in T(\zeta)$. If for each vertex $c$ lying on this path, $A(x) \geq A(c) \geq A(y)$, then $x \leq y$. With this setting and the following definition, we may proceed to prove next lemma, which has a disk counterpart in \cite{arcozzi2002} (part of Theorem 1, p. 445) using the Whitney decomposition of $\mathbb{D}$.

\begin{definition}
A positive weight $\rho : \mathbb{C}_+ \longrightarrow (0, \, \infty)$ is called \emph{regular} if for all $\varepsilon>0$ there exists $\delta >0$ such that $\rho(z_1) \leq \delta \rho(z_2)$, whenever $z_1$ and $z_2$ are within (Poincar\'{e}) hyperbolic right half-plane distance $\varepsilon$, i.e.
\begin{displaymath}
	d_H (z_1, \, z_2) \defeq \cosh^{-1} \left(1+ \frac{(\re(z_1)-\re(z_2))^2+(\im(z_1)-\im(z_2))^2}{2\re(z_1)\re(z_2)}\right) \leq \varepsilon.
\end{displaymath}
\end{definition}

\begin{lemma}
\label{secondarcozzilemma}
Let $\rho : \mathbb{C}_+ \longrightarrow (0, \, \infty)$ be regular, let $\mu$ be a positive Borel measure on $\mathbb{C}_+$. If there exists a constant $C(\mu, \rho)>0$, such that for all $a \in \mathbb{C}_+$ we have
\begin{equation}
\left(\int_{Q(a)} \frac{(\mu(Q(a) \cap Q(z))^{p'}}{(\re(z))^2} \rho(z)^{1-p'} \, dz \right)^{q'/p'} \leq C(\mu, \rho) \mu(Q(a)),
\label{eq:1}
\end{equation}
then there exists a constant $C'(\mu, \rho)>0$ such that
\begin{displaymath}
\left(\sum_{\beta \geq \alpha} \left(\sum_{\gamma \geq \beta} \mu(\gamma)\right)^{p'} \tilde{\rho}(\beta)^{1-p'} \right)^{q'/p'} \leq C' \sum_{\beta \geq \alpha} \mu(\beta),
\end{displaymath}
for all $\alpha \in T(\zeta)$. Here $\tilde{\rho}(\beta)$ is defined to be $\rho(z_\beta)$, for some fixed $z_\beta \in \beta \subset \mathbb{C}_+$, for all $\beta \in T(\zeta)$.
\end{lemma}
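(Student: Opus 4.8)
The plan is to identify the two families of objects appearing in \eqref{eq:1} and in \eqref{eq:0}, thereby reducing this lemma to Lemma \ref{firstarcozzilemma}. Write a generic vertex as $\beta = R_{(k,l)}(\zeta)$ and let $\hat\beta := \bigcup_{\gamma \geq \beta}\gamma \subseteq \mathbb{C}_+$ be its \emph{shadow} (the union of the rectangles indexed by its successors). Because the partition in the imaginary direction is dyadically nested, one checks the exact identity
\begin{displaymath}
	\hat\beta = \left\{ z \in \mathbb{C}_+ \; : \; 0 < \re(z) \leq 2^k \re(\zeta), \; 2^k l \leq \frac{\im(z) - \im(\zeta)}{\re(\zeta)} < 2^k(l+1) \right\},
\end{displaymath}
which is precisely the Carleson square $Q(a_\beta)$ — up to a horizontal boundary segment, which has Lebesgue measure zero and which a routine limiting argument (enlarging the square slightly and letting the enlargement shrink) shows carries no relevant $\mu$-mass — for the point $a_\beta$ with $\re(a_\beta) = 2^{k-1}\re(\zeta)$ and $\im(a_\beta) = \im(\zeta) + 2^{k-1}(2l+1)\re(\zeta)$; in particular $\mu(\hat\beta) = \sum_{\gamma \geq \beta}\mu(\gamma)$. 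Two further standard facts enter: each tile $\beta$ has hyperbolic diameter at most a universal constant ($\leq \cosh^{-1} 5$), so by regularity of $\rho$ there is $\delta_0 \geq 1$ with $\delta_0^{-1}\tilde\rho(\beta) \leq \rho(z) \leq \delta_0 \tilde\rho(\beta)$ for all $z \in \beta$; and $(\re z)^2$ is comparable, throughout $\beta$, to the area $A(\beta) = \tfrac12(2^k\re(\zeta))^2$.

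Now fix $\alpha \in T(\zeta)$ and apply \eqref{eq:1} with $a = a_\alpha$. Its right-hand side equals $C(\mu,\rho)\,\mu(Q(a_\alpha)) = C(\mu,\rho)\,\mu(\hat\alpha) = C(\mu,\rho)\sum_{\beta \geq \alpha}\mu(\beta)$, which is exactly the right-hand side demanded in the conclusion. It therefore remains to bound the left-hand side of \eqref{eq:1} below by a constant multiple of $\sum_{\beta \geq \alpha}\big(\sum_{\gamma \geq \beta}\mu(\gamma)\big)^{p'}\tilde\rho(\beta)^{1-p'}$. Since $Q(a_\alpha) = \hat\alpha = \bigsqcup_{\beta \geq \alpha}\beta$ (disjointly, up to a null set),
\begin{displaymath}
	\int_{Q(a_\alpha)} \frac{\mu(Q(a_\alpha)\cap Q(z))^{p'}}{(\re z)^2}\,\rho(z)^{1-p'}\,dz \;=\; \sum_{\beta \geq \alpha}\int_\beta \frac{\mu(Q(a_\alpha)\cap Q(z))^{p'}}{(\re z)^2}\,\rho(z)^{1-p'}\,dz,
\end{displaymath}
and I would discard from each tile $\beta = R_{(k,l)}(\zeta)$ all but the sub-rectangle $\beta^\# := \{z \in \beta : \re(z) > \tfrac34\,2^k\re(\zeta),\ |\im(z) - \im(a_\beta)| < 2^{k-3}\re(\zeta)\}$, whose area is $\tfrac18 A(\beta)$ — a fixed proportion of $|\beta|$. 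The crucial elementary point is that $Q(z) \supseteq \hat\beta$ for every $z \in \beta^\#$: the real-part condition holds since $2\re(z) > \tfrac32\,2^k\re(\zeta) \geq \re(w)$ for $w \in \hat\beta$, and the imaginary-part one since every $w \in \hat\beta$ has $|\im(w) - \im(a_\beta)| \leq 2^{k-1}\re(\zeta)$, so $|\im(w) - \im(z)| \leq 2^{k-1}\re(\zeta) + 2^{k-3}\re(\zeta) = \tfrac58\,2^k\re(\zeta) < \re(z)$. Combined with $\hat\beta \subseteq \hat\alpha = Q(a_\alpha)$ (valid because $\beta \geq \alpha$), this yields $\mu(Q(a_\alpha)\cap Q(z)) \geq \mu(\hat\beta) = \sum_{\gamma \geq \beta}\mu(\gamma)$ for all $z \in \beta^\#$.

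Inserting this, together with $(\re z)^2 \leq 2A(\beta)$ and $\rho(z)^{1-p'} \geq \delta_0^{1-p'}\tilde\rho(\beta)^{1-p'}$ (recall $1 - p' < 0$, so the power is decreasing), into the integral over $\beta^\#$ gives
\begin{displaymath}
	\int_\beta \frac{\mu(Q(a_\alpha)\cap Q(z))^{p'}}{(\re z)^2}\,\rho(z)^{1-p'}\,dz \;\geq\; \frac{\delta_0^{1-p'}}{16}\left(\sum_{\gamma \geq \beta}\mu(\gamma)\right)^{p'}\tilde\rho(\beta)^{1-p'},
\end{displaymath}
the area factors cancelling exactly; put $c_0 := \delta_0^{1-p'}/16$, independent of $\beta$ and $\alpha$. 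Summing over $\beta \geq \alpha$ and feeding the result back into \eqref{eq:1} produces
\begin{displaymath}
	\left(c_0 \sum_{\beta \geq \alpha}\left(\sum_{\gamma \geq \beta}\mu(\gamma)\right)^{p'}\tilde\rho(\beta)^{1-p'}\right)^{q'/p'} \;\leq\; C(\mu,\rho)\sum_{\beta \geq \alpha}\mu(\beta),
\end{displaymath}
so the conclusion follows with $C'(\mu,\rho) := c_0^{-q'/p'}\,C(\mu,\rho)$. I expect the main obstacle to be exactly the geometric claim $Q(z) \supseteq \hat\beta$ for $z \in \beta^\#$ and its bookkeeping: $\beta^\#$ must be taken far enough from the imaginary axis and concentrated enough in the imaginary direction that a Carleson square centred at any of its points engulfs the \emph{whole} shadow $\hat\beta$ — this is what lets the truncated mass $\mu(Q(a_\alpha)\cap Q(z))$ coming from the continuous condition recover the full successor sum $\sum_{\gamma \geq \beta}\mu(\gamma)$ — yet must keep $|\beta^\#| \asymp A(\beta)$ so that no loss occurs when the area weights cancel. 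The only other point requiring care is the measure-zero boundary mismatch between $Q(a_\beta)$ and $\hat\beta$, which I would handle by a brief approximation remark rather than in the main line of argument.
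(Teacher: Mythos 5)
Your proof is correct and follows essentially the same route as the paper: identify the shadow $\bigcup_{\gamma\geq\beta}\gamma$ with a Carleson square, restrict the integral over each tile $\beta$ to a sub-region of proportional area on which $Q(z)$ engulfs the whole shadow, and use the uniform hyperbolic-diameter bound plus regularity of $\rho$ to freeze $\rho$ and $(\re z)^{-2}$ on each tile. The only (immaterial) difference is that you take a small rectangle $\beta^{\#}$ where the paper uses a $45^{\circ}$ cone $S(\beta)$ opening from the left edge of the tile, and both treatments dispose of the horizontal boundary-segment mismatch by the same informal convention.
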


\begin{proof}
Choose any $\zeta \in \mathbb{C}_+$. Then for all $\alpha \in T(\zeta)$ there exists $a \in \mathbb{C}_+$ such that
\begin{equation}
	Q(a) = \bigcup_{\beta \geq \alpha} \beta \qquad \text{ and } \qquad \mu(Q(a)) = \sum_{\beta \geq \alpha}  \mu(\beta)
\label{eq:2}
\end{equation}
(or to be precise: this holds after removing some horizontal lines from some the sets $\beta \geq \alpha$, to avoid covering the same set twice, and otherwise keeping the tree model intact). Given $\beta \geq \alpha$, let $(k, \, l) \in \mathbb{Z}^2$ be such that $\beta = R_{(k, \, l)}(\zeta)$ and let
\footnotesize
\begin{displaymath}
S(\beta) := \left\{z \in \mathbb{C}_+ \, : \, 2^{k-1} < \frac{\re(z)}{\re(\zeta)} \leq 2^k, \, \frac{\left|\im(z)-\im(\zeta)-2^k\left(l+\frac{1}{2}\right)\re(\zeta)\right|}{\tan\left(\frac{\pi}{4}\right)} < \re(z)-2^{k-1}\right\},
\end{displaymath}
\normalsize
Now
\begin{displaymath}
	\bigcup_{\gamma \geq \beta} \gamma \subseteq Q(z),
\end{displaymath}
whenever $z \in S(\beta) \subset \beta \geq \alpha$, and also
\begin{equation}
 Q(a) \cap Q(z) \supseteq \bigcup_{\gamma \geq \beta} \gamma.
\label{eq:3}
\end{equation}
We also have that for any $z_1$ and $z_2$ in $\beta$
\begin{displaymath}
	d_H (z_1, \, z_2) \leq \cosh^{-1}\left(1+ \frac{2^{2k-2}+2^{2k}}{2^{2k-2}}\right) = \cosh^{-1} \left(\frac{7}{2}\right),
\end{displaymath}
which does not depend on the choice of $\beta \in T(\zeta)$, so there exists $\delta > 0$ such that
\begin{align*}
C\sum_{\beta \geq \alpha} \mu(\beta) &\stackrel{\eqref{eq:2}}{=} C\mu(Q(a)) \stackrel{\eqref{eq:1}}{\geq} \left(\int_{Q(a)} \frac{(\mu(Q(a)) \cap Q(z))^{p'}}{(\re(z))^2} \rho(z)^{1-p'}\, dz \right)^{q'/p'} \\
&\stackrel{\eqref{eq:2}}{\geq} \delta^{q'/p'}\left( \sum_{\beta \geq \alpha} \rho(\beta)^{1-p'} \int_\beta \frac{\left(\mu(Q(a) \cap Q(z))\right)^{p'}}{(\re(z))^2} \, dz \right)^{q'/p'} \\
&\geq \delta^{q'/p'} \left( \sum_{\beta \geq \alpha} \rho(\beta)^{1-p'} \int_{S(\beta)} \frac{\left(\mu(Q(a) \cap Q(z))\right)^{p'}}{(\re(z))^2} \, dz \right)^{q'/p'}\\
&\stackrel{\eqref{eq:3}}{\geq} \delta^{q'/p'} \left( \sum_{\beta \geq \alpha} \rho(\beta)^{1-p'} (\mu( \bigcup_{\gamma \geq \beta} \gamma))^{p'} \right)^{q'/p'} \\
&= \delta^{q'/p'} \left( \sum_{\beta \geq \alpha} \left(\sum_{\gamma \geq \beta} \mu(\gamma) \right)^{p'} \rho(\beta)^{1-p'} \right)^{q'/p'},
\end{align*}
as required.
\end{proof}

The following theorem is a half-plane and Hilbertian version of Theorem 1 from \cite{arcozzi2002}.

\begin{theorem}
\label{thm:arcozzi}
Let $\rho$ be a regular weight such that $\|F\|^2_{A^2_{(m)}} \geq \int_{\mathbb{C}_+} |F'(z)|^2 \rho(z) \, dz$, for all $F \in A^2_{(m)}$, and let $\mu$ be a positive Borel measure on $\mathbb{C}_+$. If
\begin{equation}
\int_{Q(a)} \left(\frac{\mu(Q(a) \cap Q(z))}{\re(z)}\right)^2 \, \frac{dz}{\rho(z)} \leq C(\mu, \rho) \mu(Q(a)),
\label{eq:1'}
\end{equation}
for all $a \in \mathbb{C}_+$, then $\mu$ is a Carleson measure for $A^2_{(m)}$.
\end{theorem}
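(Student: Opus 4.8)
The plan is to transplant the whole problem onto the tree $T(\zeta)$ and combine Lemmata \ref{firstarcozzilemma} and \ref{secondarcozzilemma}, mimicking the disc argument of \cite{arcozzi2002} with $p=q=2$, so that $p'=q'=2$ and the outer exponents $q'/p'$ equal $1$. Fix any $\zeta\in\mathbb{C}_+$, together with the dyadic decomposition $\{R_{(k,l)}(\zeta)\}_{(k,l)\in\mathbb{Z}^2}$ and its tree $T(\zeta)$; for every vertex $\beta$ choose once and for all a reference point $z_\beta\in\beta$ (the centre of the rectangle, as in Lemma \ref{secondarcozzilemma}) and a fixed dilate $\hat\beta\supset\beta$ lying inside a bounded hyperbolic neighbourhood of $\beta$, chosen large enough for the estimates below and so that $\{\hat\beta\}_{\beta\in T(\zeta)}$ still has bounded overlap, say at most $N$. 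Given $F\in A^2_{(m)}$, put $\varphi(\beta):=\bigl(\int_{\hat\beta}|F'(z)|^2\,dz\bigr)^{1/2}$, a non-negative function on $T(\zeta)$.

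The heart of the proof is the pointwise domination
\[
\sup_{z\in\beta}|F(z)|\ \le\ C\,\mathcal{I}\varphi(\beta)\qquad(\beta\in T(\zeta)),
\]
with $C$ independent of $F$ and $\beta$. I would obtain it from two ingredients. First, the submean-value inequality for the subharmonic function $|F'|^2$ on discs of radius $\asymp\re(z_\beta)$, together with the fact that the hyperbolic distance between reference points of adjacent rectangles is bounded by a universal constant, gives $|F(z_{\beta})-F(z_{\beta^-})|\lesssim\varphi(\beta)+\varphi(\beta^-)$ and $\sup_{z\in\beta}|F(z)-F(z_\beta)|\lesssim\varphi(\beta)$. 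Second, $F$ vanishes at infinity along the half-plane: since the $n=0$ summand of $\|\cdot\|_{A^2_{(m)}}$ is a Zen-space norm, $F=\mathfrak{L}[f]$ with $f\in L^2_{w_{(m)}}(0,\infty)$, and by Cauchy--Schwarz $|F(x+iy)|\le\|f\|_{L^2_{w_{(m)}}}\bigl(\int_0^\infty e^{-2xt}/w_{(m)}(t)\,dt\bigr)^{1/2}\to0$ as $x\to\infty$ (the $\Delta_2$-condition guarantees $w_{(m)}$ is bounded below near $0$, so the last integral is finite and tends to $0$). Now telescope $F$ along the ancestor chain $\beta=\gamma_0,\ \gamma_1=\beta^-,\ \gamma_2=\gamma_1^-,\dots$, whose rectangles grow toward $\re z=\infty$: for any $z\in\beta$ and any $N$,
\[
|F(z)|\le|F(z)-F(z_{\gamma_0})|+\sum_{j=0}^{N-1}|F(z_{\gamma_j})-F(z_{\gamma_{j+1}})|+|F(z_{\gamma_N})|,
\]
and letting $N\to\infty$ (using $F(z_{\gamma_N})\to0$) bounds the right-hand side by $C\sum_{\gamma\le\beta}\varphi(\gamma)=C\,\mathcal{I}\varphi(\beta)$.

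Granting this, since the rectangles partition $\mathbb{C}_+$,
\[
\int_{\mathbb{C}_+}|F|^2\,d\mu=\sum_{\beta\in T(\zeta)}\int_\beta|F|^2\,d\mu\le C^2\sum_{\beta\in T(\zeta)}\mu(\beta)\bigl(\mathcal{I}\varphi(\beta)\bigr)^2.
\]
Now hypothesis \eqref{eq:1'} is exactly condition \eqref{eq:1} of Lemma \ref{secondarcozzilemma} with $p=q=2$ and the given weight $\rho$, so that lemma yields precisely the hypothesis \eqref{eq:0} of Lemma \ref{firstarcozzilemma} (again $p=q=2$) for the tree weight $\tilde\rho(\beta):=\rho(z_\beta)$ and the mass $\beta\mapsto\mu(\beta)$. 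Applying Lemma \ref{firstarcozzilemma} to the non-negative function $\varphi$ gives $\sum_\beta\mu(\beta)(\mathcal{I}\varphi(\beta))^2\le(C')^2\sum_\beta|\varphi(\beta)|^2\tilde\rho(\beta)=(C')^2\sum_\beta\rho(z_\beta)\int_{\hat\beta}|F'|^2\,dz$. Finally, regularity of $\rho$ gives $\rho(z_\beta)\le\delta\,\rho(z)$ for $z\in\hat\beta$, and the bounded overlap of $\{\hat\beta\}$ gives
\[
\sum_\beta\rho(z_\beta)\int_{\hat\beta}|F'(z)|^2\,dz\le\delta\sum_\beta\int_{\hat\beta}|F'(z)|^2\rho(z)\,dz\le\delta N\int_{\mathbb{C}_+}|F'(z)|^2\rho(z)\,dz\le\delta N\,\|F\|^2_{A^2_{(m)}},
\]
the last step by the standing hypothesis on $\rho$. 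Chaining the three displays shows $\mu\in CM(A^2_{(m)})$, with constant independent of $F$.

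I expect the main obstacle to be the first step — choosing the dilates $\hat\beta$ and running the local analytic estimates so that the rootless telescoping genuinely yields $|F(z)|\le C\,\mathcal{I}\varphi(\beta)$ with $\varphi$ comparable to the local Dirichlet energy of $F'$, and in particular verifying the decay $F\to0$ at infinity from the $\Delta_2$-condition. Everything afterwards is the routine bounded-overlap/regular-weight bookkeeping together with the two transplantation lemmata.
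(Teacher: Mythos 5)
Your proposal is correct and takes essentially the same route as the paper's proof: the same dyadic tree $T(\zeta)$, Lemma \ref{secondarcozzilemma} to convert \eqref{eq:1'} into the tree condition \eqref{eq:0}, Lemma \ref{firstarcozzilemma} with $p=q=2$, the telescoping along the ancestor chain using $F(z)\to 0$ as $\re(z)\to\infty$, and the sub-mean-value / bounded-overlap / regular-weight bookkeeping. The only (cosmetic) difference is in the order of operations: you take $\varphi(\beta)$ to be the local Dirichlet energy and establish $|F|\lesssim \mathcal{I}\varphi$ on $\beta$ before invoking Lemma \ref{firstarcozzilemma}, whereas the paper sets $\varphi(\alpha)=F(z_\alpha)-F(z_{\alpha^-})$ so that $\mathcal{I}\varphi=\Phi$ exactly and only afterwards dominates $\sum_\alpha|\varphi(\alpha)|^2\tilde{\rho}(\alpha)$ by $\int_{\mathbb{C}_+}|F'|^2\rho$ via the mean-value property and bounded overlap.
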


\begin{proof}
Let $\zeta \in \mathbb{C}_+$. Given $F \in A^2_{(m)}$, for each $\alpha \in T(\zeta)$ let $w_\alpha, \, z_\alpha \in \overline{\alpha} \subset \mathbb{C}_+$ be such that
\begin{displaymath}
	z_\alpha := \sup_{z \in \alpha} \{|F(z)|\} \qquad \text{ and } \qquad w_\alpha := \sup_{w \in \alpha} \{|F'(w)|\}.
\end{displaymath}
Define a weight $\tilde{\rho}$ on $T(\zeta)$ by $\tilde{\rho}(\alpha) := \rho(z_\alpha)$. And also: $r_\alpha = \re(w_\alpha)/4$, $\Phi(\alpha)~:=~F(z_\alpha)$, $\varphi(\alpha) = \Phi(\alpha)-\Phi(\alpha^-)$, for all $\alpha \in T(\zeta)$. Note that $\mathcal{I}\varphi = \Phi$, because
\begin{align*}
	\lim_{\alpha \longrightarrow - \infty} |F(z_\alpha)| &= \lim_{\alpha \longrightarrow - \infty} \left|\left\langle F, \, k^{A^2_{(m)}}_z \right\rangle_{A^2_{(m)}}\right| \\
	&\stackrel{\text{Cauchy-Schwarz}}{\leq} \|F\|_{A^2_{(m)}} \lim_{\alpha \longrightarrow - \infty} \int_0^\infty \frac{e^{-2t\re(z_\alpha)}}{w_{(m)}(t)} \, dt = 0.
\end{align*}
Since \eqref{eq:1'} holds, we can apply Lemma \ref{firstarcozzilemma} to $\varphi, \, \tilde{\rho}, \, \tilde{\mu}$ (where $\tilde{\mu}(\alpha):= \mu(\alpha)$, for all $\alpha \in T(\zeta)$) in the following way
\begin{align*}
\int_{\mathbb{C}_+} |F|^2 \, d\mu & = \sum_{\alpha \in T(\zeta)} \int_\alpha |F|^2 \, d\mu \leq \sum_{\alpha \in T(\zeta)} |\Phi(\alpha)|^2 \tilde{\mu}(\alpha) \\
&\stackrel{\text{Lemma }\ref{firstarcozzilemma}}{\leq} \sum_{\alpha \in T(\zeta)} |\varphi(\alpha)|^2 \tilde{\rho}(\alpha) \defeq \sum_{\alpha \in T(\zeta)} |\Phi(\alpha)-\Phi(\alpha^-)|^2 \tilde{\rho}(\alpha) \\
&\stackrel{\stackrel{\text{Fundamental Thm}}{\text{of Calculus}}}{\leq} \sum_{\alpha \in T(\zeta)} \left| \int_{z_{\alpha^-}}^{z_\alpha} F'(w) \, dw \right|^2 \tilde{\rho}(\alpha) \\
&\lessapprox \sum_{\alpha \in T(\zeta)} \text{diam}(\alpha)^2 |F'(w_\alpha)+F'(w_{\alpha^-})|^2 \tilde{\rho}(\alpha) \\
&\lessapprox \sum_{\alpha \in T(\zeta)} \text{diam}(\alpha)^2 |F'(w_\alpha)|^2 \tilde{\rho}(\alpha) \\
&\stackrel{\stackrel{\text{Mean-Value}}{\text{Property}}}{\leq} \sum_{\alpha \in T(\zeta)} \text{diam}(\alpha)^2 \left|\frac{1}{\pi r^2_\alpha} \int_{B(w_\alpha, \, r_\alpha)} F'(z) \, dz \right|^2 \tilde{\rho}(\alpha) \\
&\stackrel{\text{H\"{o}lder's}}{\leq} \sum_{\alpha \in T(\zeta)} \frac{\text{diam}(\alpha)^2}{\pi r_\alpha^2} \int_{B(w_\alpha, \, r_\alpha)} |F'(z)|^2 \, dz \tilde{\rho}(\alpha) \\
&\lessapprox \sum_{\alpha \in T(\zeta)} \int_{\bigcup_{\beta \in T(\zeta) \; : \; \beta \cap B(w_\alpha, \, r_\alpha) \neq \varnothing}} |F'(z)|^2 \rho(z) \, dz \\
&\lessapprox \sum_{\alpha \in T(\zeta)} \int_{\alpha} |F'(z)|^2 \rho(z) \, dz \\
&\leq \|F\|^2_{A^2_{(m)}}.
\end{align*}
\end{proof}

\begin{corollary}
\label{corollary2}
Let $\mu$ be a positive Borel measure on $\mathbb{C}_+$. If there exists a constant $C(\mu)>0$ such that
\begin{displaymath}
\int_{Q(a)} \left(\frac{\mu(Q(a) \cap Q(z))}{\re(z)}\right)^2 \, dz \leq C(\mu) \mu(Q(a)),
\end{displaymath}
for all $a \in \mathbb{C}_+$, then $\mu$ is a Carleson measure for $\mathcal{D}(\mathbb{C}_+)$.
\end{corollary}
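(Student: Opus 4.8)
The plan is to obtain this statement immediately from Theorem \ref{thm:arcozzi}, specialised to $m=1$ and to the constant weight $\rho \equiv 1/\pi$. Recall from Section \ref{sec:preliminaries} that the Dirichlet space $\mathcal{D}(\mathbb{C}_+)$ is exactly $A^2_{(1)}$ for the measures $\tilde{\nu}_0 = \delta_0/2\pi$ and $\tilde{\nu}_1$ the weighted Lebesgue measure on $[0,\infty)$ (with weight $1/\pi$), so that
\[
	\|F\|^2_{\mathcal{D}(\mathbb{C}_+)} = \|F\|^2_{H^2(\mathbb{C}_+)} + \|F'\|^2_{\mathcal{B}^2_0(\mathbb{C}_+)} = \|F\|^2_{H^2(\mathbb{C}_+)} + \frac{1}{\pi}\int_{\mathbb{C}_+} |F'(z)|^2\, dz.
\]
First I would discard the non-negative Hardy summand to get $\|F\|^2_{A^2_{(1)}} \geq \int_{\mathbb{C}_+} |F'(z)|^2 \rho(z)\, dz$ with $\rho \equiv 1/\pi$, which is precisely the hypothesis that Theorem \ref{thm:arcozzi} places on the weight.

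Next I would verify that $\rho \equiv 1/\pi$ is a regular weight in the sense of the definition preceding Lemma \ref{secondarcozzilemma}: being constant, it satisfies $\rho(z_1) \leq \delta\,\rho(z_2)$ with $\delta = 1$ for every pair $z_1, z_2 \in \mathbb{C}_+$, irrespective of their hyperbolic distance. With this $\rho$, the integral condition \eqref{eq:1'} reads
\[
	\pi\int_{Q(a)} \left(\frac{\mu(Q(a)\cap Q(z))}{\re(z)}\right)^2 dz \leq C(\mu,\rho)\,\mu(Q(a)),
\]
which is exactly the hypothesis of the corollary with $C(\mu,\rho) = \pi\,C(\mu)$. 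All assumptions of Theorem \ref{thm:arcozzi} being satisfied, it follows that $\mu$ is a Carleson measure for $A^2_{(1)} = \mathcal{D}(\mathbb{C}_+)$.

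There is no real obstacle here: the only points needing a word of justification are the identification $\mathcal{D}(\mathbb{C}_+) = A^2_{(1)}$, already recorded in Section \ref{sec:preliminaries}, and the (trivial) regularity of a constant weight, which holds since it is bounded above and bounded away from zero. Thus the whole argument amounts to invoking Theorem \ref{thm:arcozzi} with $\rho \equiv 1/\pi$.
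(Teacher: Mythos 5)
Your proposal is correct and matches the paper's intent exactly: the paper states this as an immediate corollary of Theorem \ref{thm:arcozzi} (offering no separate proof), obtained by taking $\rho \equiv 1/\pi$, noting that a constant weight is trivially regular and that $\|F\|^2_{\mathcal{D}(\mathbb{C}_+)} \geq \frac{1}{\pi}\int_{\mathbb{C}_+}|F'(z)|^2\,dz$ after discarding the Hardy summand. Your verification of condition \eqref{eq:1'} with $C(\mu,\rho)=\pi C(\mu)$ is exactly the intended specialisation.
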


Note that Theorem \ref{thm:arcozzi} cannot be applied to $\mathcal{D}^\alpha(\mathbb{C}_+)$, since the limit of its functions, as the real part of their arguments approaches infinity, is not necessarily 0.

\section{An application}
\label{sec:application}

Let $A$ be an infinitesimal generator of a $C_0$-semigroup $(T(t))_{t\geq 0}$ on a Hilbert space $\mathcal{H}$. Consider the linear system
\begin{displaymath}
\frac{dx(t)}{dt} = Ax(t)+Bu(t), \qquad x(0)=x_0, \qquad t \geq 0.
\end{displaymath}
Here $u(t) \in \mathbb{C}$ is the \emph{input} at time $t$, and $B : \mathbb{C} \longrightarrow D(A^*)'$, the \emph{control operator}, where $D(A^*)'$ denotes the completion of $\mathcal{H}$ with respect to the norm
\begin{displaymath}
	\|x\|_{D(A^*)'}:= \left\|(\beta-A)^{-1}x\right\|_{\mathcal{H}},
\end{displaymath}
for any $\beta \in \rho(A)$. To ensure that the \emph{state} $x(t)$ is in $\mathcal{H}$, we need $B~\in~\mathcal{L}(\mathbb{C}, \, D(A^*)')$ and
\begin{displaymath}
	\left\|\int_0^\infty T(t)Bu(t) \, dt \right\|_{\mathcal{H}} \leq m_0 \|u\|_{L^2_w(0, \infty)},
\end{displaymath}
for some $m_0 \geq 0$. Then we say that the control operator $B$ is $L^2_w(0, \infty)$-admissible. We refer to the survey \cite{jacob2004} and the book \cite{tucsnak2009} for the basic background to admissibility in the context of well-posed systems. The following theorem appears in \cite{jacob2013} and \cite{jacob2014} (with weaker results appearing earlier in \cite{ho1983} and \cite{weiss1988} for $H^2(\mathbb{C}_+)=\mathfrak{L}\left[L^2(0, \, \infty)\right]$, and \cite{wynn2010} for $\mathcal{B}^2_{-\alpha}(\mathbb{C}_+)=\mathfrak{L}\left[L^2_{t^\alpha}(0, \, \infty)\right]$, $-1<\alpha<0$).

\begin{theorem}
Suppose the semigroup $(T(t))_{t\geq 0}$ acts on a Hilbert space $X$ with a Riesz basis of eigenvectors $(\phi_k)$; that is, $T(t)\phi_k = e^{\lambda_k t}\phi_k$, for each $k$, $(\lambda_k)$ are the eigenvalues of eigenvectors forming a Riesz basis of $A$, each of which lies in the open left complex half-plane $\mathbb{C}_-$, and $(\phi_k)$ is a Schauder basis of $X$ such that for some constants $c, \, C >0$ we have
\begin{displaymath}
	c\sum|a_k|^2 \leq \left\|\sum a_k\phi_k\right\|^2 \leq C \sum|a_k|^2,
\end{displaymath}
for all sequences $(a_k) \in \ell^2$. Suppose also that $B$ is a linear bounded map from $\mathbb{C}$ to $D(A^*)'$ corresponding to the sequence $(b_k)$. Then the control operator $B$ is $L^2_w(0, \infty)$-admissible for $(T(t))_{t \geq 0}$ if and only if
\begin{displaymath}
	\mu:= \sum_k |b_k|^2 \delta_{-\lambda_k}
\end{displaymath}
is a Carleson measure for $\mathfrak{L}\left[L^2_w(0, \infty)\right]$.
\end{theorem}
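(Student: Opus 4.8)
The plan is to unwind the definition of $L^2_w(0,\infty)$-admissibility, expand everything in the Riesz basis $(\phi_k)$, and recognise the resulting inequality as the Carleson criterion for $\mathfrak{L}[L^2_w(0,\infty)]$. Throughout one uses that this space is a reproducing kernel Hilbert space of analytic functions on $\mathbb{C}_+$ (equivalently that $\int_0^\infty e^{-2\re(z)t}/w(t)\,dt<\infty$ for every $z\in\mathbb{C}_+$), so that each point evaluation $F\mapsto F(-\lambda_k)$, for $F\in\mathfrak{L}[L^2_w(0,\infty)]$, is a well-defined bounded functional; note that $-\lambda_k\in\mathbb{C}_+$ precisely because $\lambda_k\in\mathbb{C}_-$.

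First I would fix $u\in L^2_w(0,\infty)$ and evaluate the vector $\int_0^\infty T(t)Bu(t)\,dt$. Writing $Bu(t)=u(t)\sum_k b_k\phi_k$ (interpreted in $D(A^*)'$) and using $T(t)\phi_k=e^{\lambda_k t}\phi_k$, one expects
\begin{displaymath}
\int_0^\infty T(t)Bu(t)\,dt = \sum_k b_k\left(\int_0^\infty u(t)e^{\lambda_k t}\,dt\right)\phi_k = \sum_k b_k\,F(-\lambda_k)\,\phi_k, \qquad F:=\mathfrak{L}[u].
\end{displaymath}
The first substantive step is to justify this interchange of summation and integration and to see that the series on the right converges in $X$ exactly when its coefficient sequence $(b_kF(-\lambda_k))_k$ lies in $\ell^2$. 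One establishes the identity first for $u$ in a dense subclass (say compactly supported continuous functions), where the bound $|F(-\lambda_k)|\leq\|u\|_{L^2_w(0,\infty)}\cdot(\text{norm of the reproducing kernel at }-\lambda_k)$ together with the decay $|e^{\lambda_k t}|=e^{\re(\lambda_k)t}$ makes every manipulation absolutely convergent; the completion $D(A^*)'$ and a truncation of the basis expansion of $B$ are the technical devices here. It is worth noting that the extra decay of the kernel norm as $\re(\lambda_k)\to-\infty$ is exactly what allows $(b_kF(-\lambda_k))_k\in\ell^2$ even when $(b_k)$ itself is not.

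Next, the Riesz basis inequality $c\sum|a_k|^2\leq\|\sum a_k\phi_k\|^2\leq C\sum|a_k|^2$ applied with $a_k=b_kF(-\lambda_k)$ gives
\begin{displaymath}
\left\|\int_0^\infty T(t)Bu(t)\,dt\right\|_X^2 \;\asymp\; \sum_k |b_k|^2\,|F(-\lambda_k)|^2 \;=\; \int_{\mathbb{C}_+}|F|^2\,d\mu,
\end{displaymath}
the last equality being the definition of $\mu=\sum_k|b_k|^2\delta_{-\lambda_k}$. Simultaneously $\|u\|_{L^2_w(0,\infty)}^2=\|F\|_{\mathfrak{L}[L^2_w(0,\infty)]}^2$, since $\mathfrak{L}[L^2_w(0,\infty)]$ carries by construction the norm making $\mathfrak{L}$ an isometric bijection (this is Theorem \ref{thm:mainthm} when $w=w_{(m)}$). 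Hence the admissibility estimate $\|\int_0^\infty T(t)Bu(t)\,dt\|_X\leq m_0\|u\|_{L^2_w(0,\infty)}$, required for all $u$, is equivalent — with constants differing only by the Riesz constants $c,C$ — to $\int_{\mathbb{C}_+}|F|^2\,d\mu\leq C(\mu)\|F\|^2_{\mathfrak{L}[L^2_w(0,\infty)]}$ for all $F$ in the range of $\mathfrak{L}$; since that range is all of $\mathfrak{L}[L^2_w(0,\infty)]$, no functions are missed and this is precisely $\mu\in CM(\mathfrak{L}[L^2_w(0,\infty)])$.

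I expect the main obstacle to be Step~1: making rigorous sense of $t\mapsto T(t)Bu(t)$ as a $D(A^*)'$-valued function whose improper integral nevertheless lies in $X$, and justifying the termwise Laplace transform. The cleanest route is to prove the key identity on a dense subspace where every sum and integral converges absolutely, to observe that both sides of the norm equivalence depend continuously on $u$ in the $L^2_w(0,\infty)$-norm whenever one of them is finite, and then to pass to the limit. The two structural facts that make this work are exactly the hypotheses at hand: the eigenvalues lie strictly in $\mathbb{C}_-$, so $e^{\lambda_k t}$ decays, and $\mathfrak{L}[L^2_w(0,\infty)]$ is a reproducing kernel Hilbert space, so $F\mapsto F(-\lambda_k)$ is bounded. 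Everything after the identification of $\|\int_0^\infty T(t)Bu(t)\,dt\|_X^2$ with $\int_{\mathbb{C}_+}|F|^2\,d\mu$ is routine bookkeeping.
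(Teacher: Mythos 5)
Your strategy is the standard one for diagonal semigroups, and it is essentially correct; but note that the paper itself offers no proof of this theorem at all --- it is quoted verbatim from \cite{jacob2013} and \cite{jacob2014} (with antecedents in \cite{ho1983}, \cite{weiss1988}, \cite{wynn2010}), so the only comparison available is with those sources, which argue exactly as you do: expand $B$ in the Riesz basis, use $T(t)\phi_k=e^{\lambda_k t}\phi_k$ to identify $\int_0^\infty T(t)Bu(t)\,dt$ with $\sum_k b_k\,\mathfrak{L}[u](-\lambda_k)\,\phi_k$, and convert the Riesz-basis norm equivalence plus the isometry $\|u\|_{L^2_w}=\|\mathfrak{L}[u]\|$ into the Carleson criterion for the discrete measure $\sum_k|b_k|^2\delta_{-\lambda_k}$. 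Your reduction is clean and the logic in both directions is right: if $\mu$ is Carleson the coefficient sequence $(b_kF(-\lambda_k))_k$ is in $\ell^2$ so the series converges in $X$ and the upper Riesz bound gives admissibility; if $B$ is admissible the lower Riesz bound gives the Carleson inequality for every $F$ in the range of $\mathfrak{L}$, which is all of $\mathfrak{L}[L^2_w(0,\infty)]$. The one place where your write-up remains a sketch is precisely the step you flag yourself: the rigorous meaning of $\int_0^\infty T(t)Bu(t)\,dt$ as a $D(A^*)'$-valued (improper) integral and the justification of the termwise Laplace transform. Your proposed route --- establish the identity for a dense class of inputs and truncated basis expansions where everything converges absolutely, then pass to the limit using that whichever side of the equivalence is assumed finite controls the other --- is the correct and standard way to close this, but as written it is a plan rather than a proof; if you intend this as a complete argument you should carry out that density/limiting step explicitly (in particular, the extension of the Carleson inequality from a dense set of $F$ to all of the space uses that norm convergence in an RKHS implies pointwise convergence, plus Fatou). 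Everything else is, as you say, bookkeeping.
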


This theorem can also be stated for observation operators (see again \cite{jacob2014}). Combining the above statement with Theorem \ref{thm:mainthm}, we get a direct application of the results established throughout this paper (i.e. if $w$ is of the form \eqref{eq:wm}, then $B$ is $L^2_w(0, \, \infty)$-admissible if and only if $\mu$ is a Carleson measure for $A^2_{(m)}$). We leave the details for the reader.

\textbf{Acknowledgment} 	The author of this article would like to thank the UK Engineering and Physical Research Council (EPSRC) and the School of Mathematics at the University of Leeds for their financial support. He is also extremely thankful to Professor Jonathan R. Partington for all the important comments and help in preparation of this research paper.

\end{document}